\begin{document}
\newtheorem{theorem}{Theorem}
\newtheorem{lemma}[theorem]{Lemma}
\newtheorem{corollary}[theorem]{Corollary}
\newtheorem{definition}[theorem]{Definition}
\newtheorem{example}[theorem]{Example}
\pagenumbering{roman}
\renewcommand{\thetheorem}{\thesection.\arabic{theorem}}
\renewcommand{\thelemma}{\thesection.\arabic{lemma}}
\newenvironment{proof}{\noindent{\bf{Proof.\/}}}{\hfill$\blacksquare$\vskip0.1in}
\renewcommand{\thetable}{\thesection.\arabic{table}}
\renewcommand{\thedefinition}{\thesection.\arabic{definition}}
\renewcommand{\theexample}{\thesection.\arabic{example}}
\renewcommand{\theequation}{\thesection.\arabic{equation}}
\newcommand{\mysection}[1]{\section{#1}\setcounter{equation}{0}
\setcounter{theorem}{0} \setcounter{lemma}{0}
\setcounter{definition}{0}}
\newcommand{\mrm}{\mathrm}
\newcommand{\beq}{\begin{equation}}
\newcommand{\eeq}{\end{equation}}

\newcommand{\ben}{\begin{enumerate}}
\newcommand{\een}{\end{enumerate}}

\newcommand{\hc}{\hat{c}}
\newcommand{\hd}{\hat{d}}
\newcommand{\hp}{\hat{p}}
\newcommand{\hq}{\hat{q}}
\newcommand{\hu}{\hat{u}}
\newcommand{\hv}{\hat{v}}
\newcommand{\hP}{\widehat{P}}
\newcommand{\hQ}{\widehat{Q}}
\newcommand{\hF}{\widehat{F}}
\newcommand{\hG}{\widehat{G}}
\newcommand{\hH}{\widehat{H}}
\newcommand{\tT}{\widetilde{T}}
\newcommand{\btB}{\mathbf{\widetilde{B}}^{(s)}}
\newcommand{\bA}{\mathbf{A}}
\newcommand{\bB}{\mathbf{B}}
\newcommand{\cT}{\check{T}}
\newcommand{\C}{\mbox{$\mathbb{C}$}}
\title
{\bf A de Montessus Type Convergence Study for a Vector-Valued Rational Interpolation Procedure of  Epsilon Class}

\author
{Avram Sidi\\
Computer Science Department\\
Technion - Israel Institute of Technology\\ Haifa 32000, Israel\\
E-mail:  asidi@cs.technion.ac.il\\
http://www.cs.technion.ac.il/\~{}asidi}
\date{January 2017}
\bigskip\bigskip
\maketitle \thispagestyle{empty}
\newpage
\begin{abstract}
In a series of recent publications  of the author, three  interpolation procedures, denoted IMPE, IMMPE, and ITEA,
 were proposed for vector-valued functions $F(z)$, where $F : \C \to\C^N$,
and  their algebraic properties were studied. The convergence studies of two of the methods, namely, IMPE and IMMPE, were also carried out  as these  methods are being applied to meromorphic functions with simple poles, and de Montessus and K\"{o}nig type theorems for them were proved.
In the present work,
we concentrate on  ITEA. We study
its convergence properties as it is applied to meromorphic functions with simple poles,
and  prove de Montessus and K\"{o}nig type theorems analogous to those obtained for IMPE and IMMPE.
\end{abstract}

\vspace{1cm} \noindent {\bf Mathematics Subject Classification
2000:} 30E10, 30E15, 41A20, 41A25.

\vspace{1cm} \noindent {\bf Keywords and expressions:}
Vector-valued rational interpolation; Hermite interpolation; Newton interpolation formula; de Montessus theorem; K\"{o}nig theorem

 \thispagestyle{empty}

\newpage
\pagenumbering{arabic}
\section{Introduction and background} \label{se1}
  \setcounter{equation}{0} \setcounter{theorem}{0}
   In \cite{Sidi:2004:NAV}, the author developed three rational interpolation methods for vector-valued functions of a complex variable.   These methods were denoted IMPE, IMMPE, and ITEA. Some of the algebraic properties of these methods were already presented in \cite{Sidi:2004:NAV} while others were  explored in \cite{Sidi:2006:APS}, where it was also shown that the methods are symmetric functions of the points of interpolation and that they reproduce
   vector-valued
   rational functions exactly. In \cite{Sidi:2008:MTC}, \cite{Sidi:2008:IMPE-1}, and \cite{Sidi:2010:IMPE-2}, de Montessus and K\"{o}nig type convergence theories for IMMPE and IMPE, as these methods are applied to vector-valued meromorphic functions with simple poles, were presented. In this work, we treat the convergence properties of ITEA, as it is being applied to the same class of functions, and we prove de Montessus and K\"{o}nig type theorems analogous to those  for IMPE and IMMPE. As will become clear, following some necessary adjustments, the techniques of \cite{Sidi:2008:MTC} that were developed for analyzing IMMPE, will be  directly applicable when analyzing ITEA.

\section{Review of the algebraic properties of ITEA}
\label{se2}
To set the stage for later developments, and to fix the notation as well, we start
with a brief description of the developments in \cite{Sidi:2004:NAV}  and \cite{Sidi:2006:APS}.

Let $z$ be a complex variable and let $F(z)$ be a vector-valued
function such that $F:\C\to \C^N$. Assume that $F(z)$ is defined
on a bounded open set $\Omega\subset \C$ and consider the problem of
interpolating $F(z)$ at some of the points $\xi_1,\xi_2,\ldots,$
in this set. We do not assume that the $\xi_i$ are necessarily
distinct. The general picture is described in the next paragraph:

Let $a_1,a_2,\ldots,$ be distinct complex numbers, and order the $\xi_i$ such that
\begin{align} \label{eq:2-12}
&\xi_1=\xi_2=\cdots=\xi_{r_1}=a_1 \nonumber \\
&\xi_{r_1+1}=\xi_{r_1+2}=\cdots=\xi_{r_1+r_2}=a_2 \nonumber \\
&\xi_{r_1+r_2+1}=\xi_{r_1+r_2+2}=\cdots=\xi_{r_1+r_2+r_3}=a_3 \nonumber \\
&\mbox{and so on.}
\end{align}

Let $G_{m,n}(z)$ be the vector-valued polynomial (of degree at
most $n-m$) that interpolates $F(z)$ at the points
$\xi_m,\xi_{m+1},\ldots,\xi_n$ in the generalized Hermite sense.
Thus, in Newtonian form, this polynomial is given as in (see,
e.g., Stoer and Bulirsch \cite[Chapter 2]{Stoer:2002:INA} or
Atkinson \cite[Chapter 3]{Atkinson:1989:INA})

\beq \label{eq:2-2}G_{m,n}(z)=\sum^n_{i=m}F[\xi_m,\xi_{m+1},\ldots,\xi_i]\prod^{i-1}_{j=m}(z-\xi_j).\eeq
Here, $F[\xi_r,\xi_{r+1},\ldots,\xi_{r+s}]$ is the divided
difference of order $s$ of $F(z)$ over the set of points
$\{\xi_r,\xi_{r+1},\ldots,\xi_{r+s}\}$.
 Obviously,
$F[\xi_r,\xi_{r+1},\ldots,\xi_{r+s}]$ are all vectors in $\C^N$.

Let us  define the scalar polynomials $\psi_{m,n}(z)$ via
\begin{equation}\label{eq:2-1}
\psi_{m,n}(z)=\prod^{n}_{r=m}(z-\xi_{r}),\quad n\geq m\geq 1;
\quad \psi_{m,m-1}(z)=1, \quad m\geq 1.
\end{equation}
Let us also define the vectors $D_{m,n}$ via
\begin{equation}\label{eq:2-1a}
D_{m,n}=F[\xi_m,\xi_{m+1},\ldots,\xi_n],\quad n\geq m.
\end{equation}
With this notation, we can rewrite (\ref{eq:2-2}) in the form
\begin{equation}\label{eq:2-2a}
G_{m,n}(z)=\sum^n_{i=m}D_{m,i}\,\psi_{m,i-1}(z).
\end{equation}

Then the  vector-valued rational function
   $R_{p,k}(z)$ from ITEA that interpolates $F(z)$ at $\xi_1,\ldots,\xi_p$ in the sense of Hermite is defined as in
  \beq\label{eq:2-5}
R_{p,k}(z)=\frac{U_{p,k}(z)}{V_{p,k}(z)}=
\frac{\sum^k_{j=0}c_j\,\psi_{1,j}(z)\,G_{j+1,p}(z)}
{\sum^k_{j=0}c_j\,\psi_{1,j}(z)},\eeq
the scalars $c_0,c_1,\ldots,c_k$ being  determined  by the requirement
\begin{equation}\label{eq:3-9}
\bigg(q,\sum^{k}_{j=0}c_jD_{j+1,p+i}\bigg)=0, \quad
i=1,\ldots,k;\quad c_k=1,
\end{equation}
where $(\cdot\,,\cdot)$ is an inner product
and $q$ is some fixed nonzero vector
in $\C^N$.
 Clearly, \eqref{eq:3-9} results in  the linear system
\begin{equation}\label{IMMPE}  \sum^k_{j=0}u_{i,j}c_j=-u_{i,k},\quad i=1,\ldots,k;\quad c_k=1;
\quad u_{i,j}=\big(q,D_{j+1,p+i}\big),
\end{equation}
a unique solution for which exists provided
 \begin{equation}\label{unique}
\begin{vmatrix} u_{1,0}&u_{1,1}&\cdots&u_{1,k-1}\\
u_{2,0}&u_{2,1}&\cdots&u_{2,k-1}\\
\vdots&\vdots& &\vdots\\
u_{k,0}&u_{k,1}&\cdots&u_{k,k-1}\end{vmatrix}\neq 0.
\end{equation}

Combining \eqref{eq:2-5} and \eqref{IMMPE}, we obtain the following determinant representation for $R_{p,k}(z)$ from ITEA, with $u_{i,j}=(q,D_{j+1,p+i})$, $i\geq1$, $j\geq0$:
\beq \label{eq:16-1}
R_{p,k}(z)=\frac{P(z)}{Q(z)}=\frac
{\begin{vmatrix}\displaystyle \psi_{1,0}(z)\,G_{1,p}(z)&
\displaystyle \psi_{1,1}(z) \,G_{2,p}(z) &
\cdots& \displaystyle \psi_{1,k}(z)\, G_{k+1,p}(z)\\
u_{1,0}&u_{1,1}&\cdots&u_{1,k}\\
u_{2,0}&u_{2,1}&\cdots&u_{2,k}\\
\vdots&\vdots& &\vdots\\
u_{k,0}&u_{k,1}&\cdots&u_{k,k}\end{vmatrix}}
{\begin{vmatrix}
\displaystyle \psi_{1,0}(z)&
\displaystyle \psi_{1,1}(z)&
\cdots& \displaystyle \psi_{1,k}(z)\\
u_{1,0}&u_{1,1}&\cdots&u_{1,k}\\
u_{2,0}&u_{2,1}&\cdots&u_{2,k}\\
\vdots&\vdots& &\vdots\\
u_{k,0}&u_{k,1}&\cdots&u_{k,k}\end{vmatrix}}.
\end{equation}
Here, the numerator determinant $P(z)$ is vector-valued and is
defined by its expansion with respect to its first row. That is,
if $M_j$ is the cofactor of the term $\psi_{1,j}(z)$ in the
denominator determinant $Q(z)$, then
\begin{equation}\label{eq:4-3a}
R_{p,k}(z)=\frac{\sum^{k}_{j=0}M_j\,\psi_{1,j}(z)\,G_{j+1,p}(z)}
{\sum^{k}_{j=0}M_j\,\psi_{1,j}(z)}.
\end{equation}
[Note that this determinant representation  offers a
very effective tool for the algebraic and analytical study of $R_{p,k}(z)$.
As we will see
later in this work, it forms the basis of our convergence study.]

From \eqref{eq:2-5} and \eqref{eq:3-9}, it is clear that  the number of function evaluations [namely, (i)\,$F(\xi_i)$ in case the $\xi_i$ are distinct and (ii)\,$F(\xi_i)$ and some of its derivatives otherwise] that are needed to determine $R_{p,k}(z)$ is $p+k$, and these are based on $\xi_1,\ldots,\xi_{p+k}$. [This should be contrasted with the interpolants $R_{p,k}(z)$ that result from IMPE and IMMPE, which need $p+1$ function evaluations based on $\xi_1,\ldots,\xi_{p+1}$.]
\medskip

\subsubsection*{Remarks:}
\begin{enumerate}
\item
$R_{p,k}(z)=U_{p,k}(z)/V_{p,k}(z)$ from ITEA interpolates $F(z)$ at $\xi_1,\ldots,\xi_p$ in the sense of Hermite, provided $V_{p,k}(\xi_i)\neq0$ for all $i=1,\ldots,p.$
\item
 Note that $R_{p,k}(z)$, even with {\em arbitrary} $c_j$ in \eqref{eq:2-5},
interpolates $F(z)$ at $\xi_1,\ldots,\xi_p$ in the sense of Hermite,
provided $V_{p,k}(\xi_i)\neq0$ for all $i=1,\ldots,p.$
 However, the quality of $R_{p,k}(z)$ as an approximation to $F(z)$ in the $z$-plane depends heavily on how the $c_j$ are chosen. Thus, the methods IMPE, IMMPE, and ITEA choose the $c_j$ in special ways; as we have shown in \cite{Sidi:2008:MTC}, \cite{Sidi:2008:IMPE-1}, and \cite{Sidi:2010:IMPE-2},
the methods IMPE and IMMPE do provide very good approximations for meromorphic functions $F(z)$.
Here we prove that ITEA does too.
\end{enumerate}

We end this section by stating four  algebraic properties of ITEA. Of these, the first three were explored in \cite{Sidi:2006:APS}, while the forth is new:

\begin{enumerate}
\item {\em Limiting property:}
When $\xi_i$ all tend to $0$ simultaneously, it follows from the equations in \eqref{IMMPE} that
$R_{p,k}(z)$ tends to the  approximant $s_{n+k,k}(z)$  from the method STEA of Sidi \cite{Sidi:1994:RAP} as the latter is being applied to the Maclaurin  series of $F(z)$.\footnote{STEA approximants are obtained by applying  the topological epsilon  algorithm (TEA) of Brezinski \cite{Brezinski:1975:GTS} to the sequence of partial sums of the Maclaurin  series of $F(z)$.} Here $n=p-k$.

 \item {\em Symmetry property:}
 The denominator polynomial $V_{p,k}(z)=\sum^{k}_{j=0}c_j\psi_{1,j}(z)$ is a symmetric function of $\xi_1,\ldots,\xi_{p+k}$, which  go into its construction. $R_{p,k}(z)$ itself is a symmetric function of $\xi_1,\ldots,\xi_p$.\footnote{A function $f(x_1,\ldots,x_m)$ is  symmetric in
 $x_1,\ldots,x_m$ if $f(x_{i_1},\ldots,x_{i_m})=f(x_1,\ldots,x_m)$
 for every permutation $(x_{i_1},\ldots,x_{i_m})$ of
 $(x_1,\ldots,x_m)$.}

 \item  {\em Reproducing property:}
 If $F(z)=\widetilde{U}(z)/\widetilde{V}(z)$ is a vector-valued rational function with degree of numerator $\widetilde{U}(z)$ at most $p-1$ and degree of denominator $\widetilde{V}(z)$ equal to $k$ and if $F(\xi_i)$, $i=1,\ldots,p,$  are all defined, then $R_{p,k}(z)\equiv F(z)$.

\item {\em Projection property:}
In addition to  interpolating $F(z)$ at $\xi_1,\ldots,\xi_p,$  $R_{p,k}(z)$ also has  the following projection property:
$$ \big(q,F(z)-R_{p,k}(z)\big)\big|_{z=\xi_{p+i}}=0,\quad i=1,\ldots,k.$$
\end{enumerate}

Because ITEA and IMMPE, in producing the relevant $R_{p,k}(z)$, differ substantially (i)\,in the number of the $\xi_i$ they use and (ii)\,in the structure of the relevant scalars $u_{i,j}$,   it seems that their analyses should be different from each other.
Fortunately,  in this work, we  are able to overcome these obstacles and apply to ITEA the techniques  used for analyzing IMMPE, following some clever adjustments.

To keep things simple, in the sequel, we adopt the notation of \cite{Sidi:2008:MTC}, where we treated  IMMPE.
In order not to repeat the arguments of \cite{Sidi:2008:MTC} unnecessarily, we will keep our
treatment of ITEA short and will refer the reader to \cite{Sidi:2008:MTC} for
technical details.

\section{Technical preliminaries and error formula  when $F(z)$ is a vector-valued rational function}\label{se3}

\setcounter{equation}{0}
\setcounter{theorem}{0}
We start our study of ITEA for the case in which the function $F(z)$ is
a vector-valued rational function with simple poles, namely,
\begin{equation}\label{ratF}
F(z)=\sum^\mu_{s=1}\frac{v_s}{z-z_s}+u(z),
\end{equation}
where $u(z)$ is an arbitrary vector-valued polynomial,
$z_1,\ldots,z_\mu$ are distinct points in the complex plane,
and $v_1,\ldots,v_\mu$ are  some nonzero  vectors
in $\C^N$. \\

\subsection{Technical preliminaries}
The following  technical tools that were used  in \cite{Sidi:2008:MTC} will be used throughout this work too.

\begin{lemma}[\cite{Sidi:2008:MTC}, Lemma 3.2] \label{le:a2}
Let $Q_i(x)=\sum^i_{j=0} a_{ij} x^j$, with $a_{ii} \neq
0,\ i=0,1\ldots,n$, and let $x_i,\ i=0,1,\ldots,n$,
be arbitrary complex numbers. Then
\begin{equation} \label{eq:a33}
\left |
\begin{array}{cccc}
Q_0(x_0) & Q_0(x_1) & \cdots & Q_0(x_n)\\
Q_1(x_0) & Q_1(x_1) & \cdots & Q_1(x_n)\\
\vdots & \vdots & & \vdots\\
Q_n(x_0) & Q_n(x_1) & \cdots & Q_n(x_n)
\end{array}
\right |
= \left (\prod^n_{i=0}a_{ii} \right )V(x_0, x_1,\ldots,
x_n),
\end{equation}
where
$$V(x_0,x_1,\ldots,x_n)=\begin{vmatrix}1&1&\cdots&1\\ x_0&x_1&\cdots&x_n\\
\vdots&\vdots&&\vdots\\ x_0^n&x_1^n&\cdots&x_n^n\end{vmatrix}=\prod_{0\leq i < j \leq n}
(x_j-x_i)$$ is a Vandermonde determinant.
\end{lemma}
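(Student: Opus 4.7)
The plan is to exhibit the matrix on the left of \eqref{eq:a33} as a product of two simpler matrices whose determinants are transparent. Specifically, let $A$ be the $(n+1)\times(n+1)$ lower-triangular matrix with entries $A_{ij}=a_{ij}$ for $0\le j\le i$ and $A_{ij}=0$ for $j>i$, and let $X$ be the $(n+1)\times(n+1)$ matrix with $X_{ij}=x_j^{\,i}$. The first step is the one line
\[
(AX)_{ij}=\sum_{k=0}^{n}A_{ik}X_{kj}=\sum_{k=0}^{i}a_{ik}x_j^{\,k}=Q_i(x_j),
\]
which identifies the matrix appearing in \eqref{eq:a33} with $AX$.

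\textbf{Key steps.} Once the factorization $[Q_i(x_j)]=AX$ has been written down, the result is immediate from $\det(AX)=\det A\cdot\det X$. The triangularity of $A$ gives $\det A=\prod_{i=0}^{n}a_{ii}$, and $X$ is exactly the transpose of the matrix defining the Vandermonde determinant, so $\det X=V(x_0,\ldots,x_n)=\prod_{0\le i<j\le n}(x_j-x_i)$. Multiplying these two factors yields the identity as stated. An equivalent route, staying inside the determinant, is to use multilinearity: for $i=n,n-1,\ldots,1$, subtract appropriate scalar multiples of rows $Q_0,\ldots,Q_{i-1}$ from row $Q_i$ so that row $Q_i$ becomes $(a_{ii}x_0^{\,i},\ldots,a_{ii}x_n^{\,i})$ without changing the determinant; pulling the common factor $a_{ii}$ out of each row then leaves the Vandermonde matrix.

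\textbf{Main obstacle.} There is really no obstacle beyond spotting the factorization; the hypothesis $a_{ii}\neq0$ is used only to read off that the diagonal of $A$ contributes a nonzero product, although the identity itself is valid whether or not $a_{ii}=0$. The entire proof is two or three lines, which is why the result is imported verbatim from \cite{Sidi:2008:MTC} and used as a black box in what follows.
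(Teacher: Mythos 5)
Your proof is correct. The paper itself does not reprove this lemma --- it imports it from \cite{Sidi:2008:MTC} as a black box --- and your factorization $[Q_i(x_j)]=AX$ with $A$ lower triangular and $X$ the Vandermonde matrix (together with the equivalent row-reduction argument) is exactly the standard argument one would find there; your side remark that the identity holds even when some $a_{ii}=0$ is also accurate.
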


\begin{lemma}[\cite{Sidi:2008:MTC}, Lemma 3.3] \label{le1}
Let $\omega_a(z)=(z-a)^{-1}$. Then, $ \omega_a[\xi_m,\ldots,\xi_n]$, the divided
difference of $\omega_a(z)$ over the set of points $\{\xi_m,\ldots,\xi_n\}$, is given by
\begin{equation}\label{omega}
 \omega_a[\xi_m,\ldots,\xi_n]=-\frac{1}{\psi_{m,n}(a)}.
 \end{equation}
This is true whether the $\xi_i$ are distinct or not.
\end{lemma}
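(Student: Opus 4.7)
\noindent\textbf{Proof plan for Lemma~\ref{le1}.}
The plan is to proceed by induction on $n-m$, using the standard recurrence for divided differences at distinct nodes, and then to extend to the confluent case by a continuity/limit argument.

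For the base case $n=m$, we simply evaluate
\[
\omega_a[\xi_m]=\omega_a(\xi_m)=\frac{1}{\xi_m-a}=-\frac{1}{a-\xi_m}=-\frac{1}{\psi_{m,m}(a)},
\]
which agrees with the claimed formula. For the inductive step, assuming the formula holds for divided differences over $n-m$ points, and assuming first that $\xi_m\neq \xi_n$, the recurrence
\[
\omega_a[\xi_m,\ldots,\xi_n]=\frac{\omega_a[\xi_{m+1},\ldots,\xi_n]-\omega_a[\xi_m,\ldots,\xi_{n-1}]}{\xi_n-\xi_m}
\]
together with the inductive hypothesis gives
\[
\omega_a[\xi_m,\ldots,\xi_n]=\frac{1}{\xi_n-\xi_m}\left[-\frac{1}{\psi_{m+1,n}(a)}+\frac{1}{\psi_{m,n-1}(a)}\right].
\]
Bringing the bracketed expression over the common denominator $\psi_{m,n}(a)=(a-\xi_m)\psi_{m+1,n}(a)=(a-\xi_n)\psi_{m,n-1}(a)$, the numerator simplifies to $-(a-\xi_m)+(a-\xi_n)=\xi_m-\xi_n$, which cancels the prefactor $\xi_n-\xi_m$ up to a sign, yielding $-1/\psi_{m,n}(a)$ as claimed.

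To handle the general case in which some of the $\xi_i$ coincide, I would invoke continuity: the Hermite (confluent) divided difference of a function analytic in a neighborhood of the interpolation nodes is the limit, as the nodes are allowed to coalesce, of the ordinary divided difference with distinct nodes (this is the usual definition via repeated differentiation, realized as a limit of finite differences). Since $\omega_a(z)$ is analytic away from $z=a$ and the right-hand side $-1/\psi_{m,n}(a)$ is a continuous function of $(\xi_m,\ldots,\xi_n)$ throughout the region $\{\xi_i\neq a\}$, the identity established for distinct nodes extends by continuity to all configurations of nodes.

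The main (minor) obstacle is the confluent extension: one must be comfortable with the fact that generalized Hermite divided differences are continuous in the nodes, so that the identity proved on the dense open set of distinct-node configurations persists on the closure. An alternative, perhaps cleaner, route that avoids this limiting argument altogether is the contour-integral representation of divided differences: choose a contour enclosing $\xi_m,\ldots,\xi_n$ but not $a$, use
\[
\omega_a[\xi_m,\ldots,\xi_n]=\frac{1}{2\pi i}\oint\frac{dz}{(z-a)\psi_{m,n}(z)},
\]
and observe that the integrand is $O(|z|^{-(n-m+2)})$ at infinity so the sum of all its residues (at $a$ and at the $\xi_i$'s) vanishes; the residue at $z=a$ equals $1/\psi_{m,n}(a)$, giving the desired identity uniformly in multiplicities. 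Either route suffices; I would present the inductive proof first and then remark on the confluent extension.
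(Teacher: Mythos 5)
Your proof is correct. Note that the paper itself gives no proof of this lemma --- it is imported verbatim from \cite{Sidi:2008:MTC} (Lemma 3.3 there) --- so there is nothing in this document to compare against; but both routes you sketch are sound. The induction is verified correctly: with $\psi_{m,n}(a)=(a-\xi_m)\psi_{m+1,n}(a)=(a-\xi_n)\psi_{m,n-1}(a)$ the numerator indeed reduces to $\xi_m-\xi_n$ and cancels the prefactor. The only point needing care is the confluent case, and you identify it honestly; the continuity of generalized divided differences in the nodes (for a function analytic near them) is standard, though your contour-integral alternative,
\begin{equation*}
\omega_a[\xi_m,\ldots,\xi_n]=\frac{1}{2\pi i}\oint\frac{dz}{(z-a)\,\psi_{m,n}(z)},
\end{equation*}
is the cleaner choice here since it treats all multiplicities uniformly in a single residue computation and matches the lemma's claim ``whether the $\xi_i$ are distinct or not'' without any limiting argument. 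Either version would be acceptable as a self-contained proof.
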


\begin{lemma}[\cite{Sidi:2008:MTC}, Lemma 3.4] \label{le2}
Let $F(z)$ be given as in \eqref{ratF}. Let $n-m>\deg(u)$. Then,
the following are true whether the $\xi_i$ are distinct or not:
\begin{itemize}
\item [\textnormal{(i)}]
 $D_{m,n}=F[\xi_m,\ldots,\xi_n]$ is given as in
\begin{equation}\label{dmn}
 D_{m,n}=-\sum^\mu_{s=1}\frac{v_s}{\psi_{m,n}(z_s)}.
\end{equation}
Therefore, we also have
\begin{equation}\label{qdmn}
\big(q,D_{m,n}\big)=-\sum^\mu_{s=1}\frac{(q,v_s)}{\psi_{m,n}(z_s)}.
 \end{equation}
 \item[\textnormal{(ii)}]
$F(z)-G_{m,n}(z)=\psi_{m,n}(z)F[z,\xi_m,\ldots,\xi_n]$ is given as in
\begin{equation}\label{F-G}
F(z)-G_{m,n}(z)=\psi_{m,n}(z)\sum^\mu_{s=1}\frac{v_s}{z-z_s}\,\frac{1}
{\psi_{m,n}(z_s)}.
\end{equation}
 \end{itemize}
\end{lemma}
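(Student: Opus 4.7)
The plan is to exploit linearity of divided differences and reduce both parts to a single calculation: the divided difference of the simple pole $\omega_a(z)=(z-a)^{-1}$, which is already available in Lemma \ref{le1}. Writing
$$F(z)=\sum_{s=1}^\mu v_s\,\omega_{z_s}(z)+u(z),$$
and observing that the map $F\mapsto F[\xi_m,\ldots,\xi_n]$ is linear in $F$ and scalar-linear in the vector coefficients $v_s$, one gets
$$D_{m,n}=\sum_{s=1}^\mu v_s\,\omega_{z_s}[\xi_m,\ldots,\xi_n]+u[\xi_m,\ldots,\xi_n].$$
The key classical fact is that a divided difference of order $r$ annihilates every polynomial of degree strictly less than $r$; the divided difference here has order $n-m$, so the hypothesis $n-m>\deg(u)$ kills the $u$-contribution. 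Lemma \ref{le1} then gives $\omega_{z_s}[\xi_m,\ldots,\xi_n]=-1/\psi_{m,n}(z_s)$, and substituting yields \eqref{dmn}. Taking the inner product with $q$ and using linearity of $(q,\cdot)$ immediately gives \eqref{qdmn}.

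For part (ii), I would begin from the standard Newton remainder identity for the Hermite interpolant,
$$F(z)-G_{m,n}(z)=\psi_{m,n}(z)\,F[z,\xi_m,\ldots,\xi_n],$$
so it only remains to evaluate the divided difference on the augmented node set $\{z,\xi_m,\ldots,\xi_n\}$. This is a divided difference of order $n-m+1$, and since $\deg(u)<n-m\le n-m+1$, the polynomial part $u$ again contributes zero. For each simple pole, Lemma \ref{le1} applied to the augmented node set gives
$$\omega_{z_s}[z,\xi_m,\ldots,\xi_n]=-\frac{1}{(z_s-z)\,\psi_{m,n}(z_s)}=\frac{1}{(z-z_s)\,\psi_{m,n}(z_s)},$$
and summing and multiplying by $\psi_{m,n}(z)$ delivers \eqref{F-G}.

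The only real obstacle is the sign/normalization bookkeeping when the extra node $z$ is prepended to $\xi_m,\ldots,\xi_n$: one must recognize that the node polynomial for the enlarged set evaluated at $z_s$ is $(z_s-z)\psi_{m,n}(z_s)$, whose factor $(z_s-z)$ is then rewritten as $-(z-z_s)$ to cancel the leading minus sign produced by Lemma \ref{le1}. Apart from that, everything is routine linearity and the standard polynomial-annihilation property of divided differences, so no confluent limit argument or case split on whether the $\xi_i$ are distinct is needed: Lemma \ref{le1} already handles both cases, which is exactly why the final clause ``whether the $\xi_i$ are distinct or not'' in the statement is free.
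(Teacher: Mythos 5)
Your proof is correct and follows exactly the route the paper intends: the paper states this lemma by citation to \cite{Sidi:2008:MTC}, and the argument there is precisely your combination of linearity of divided differences, the annihilation of polynomials of degree less than the order of the divided difference (which is where the hypothesis $n-m>\deg(u)$ enters), and Lemma \ref{le1} applied to $\omega_{z_s}$ — for part (ii) on the augmented node set $\{z,\xi_m,\ldots,\xi_n\}$, with the sign bookkeeping $(z_s-z)=-(z-z_s)$ handled as you describe. Nothing is missing.
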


\subsection{Error formula}
Using \eqref{eq:16-1}, \eqref{eq:4-3a}, and \eqref{F-G}, we can derive a determinant representation for the error $F(z)-R_{p,k}(z)$ as in the next lemma:
\begin{lemma}[\cite{Sidi:2008:MTC}, Lemma 3.5] \label{le:error}
Let
\begin{equation}\label{delta_j}
\Delta_j(z)=\psi_{1,j}(z)\big[F(z)-G_{j+1,p}(z)\big]=\psi_{1,p}(z)F[z,\xi_{j+1},\ldots,\xi_p],\quad j=0,1,\ldots\ .
\end{equation}
Then the  error in $R_{p,k}(z)$ has the determinant representation
\begin{equation}\label{eq:error}
F(z)-R_{p,k}(z)=\frac{\Delta(z)}{Q(z)},
\end{equation}
where
\begin{equation}\label{delta}
\Delta(z)=\begin{vmatrix}
 \Delta_0(z)& \Delta_1(z)&
\cdots&  \Delta_k(z)\\
u_{1,0}&u_{1,1}&\cdots&u_{1,k}\\
u_{2,0}&u_{2,1}&\cdots&u_{2,k}\\
\vdots&\vdots& &\vdots\\
u_{k,0}&u_{k,1}&\cdots&u_{k,k}\end{vmatrix},\quad
Q(z)=\begin{vmatrix}
 \psi_{1,0}(z)& \psi_{1,1}(z)&
\cdots&  \psi_{1,k}(z)\\
u_{1,0}&u_{1,1}&\cdots&u_{1,k}\\
u_{2,0}&u_{2,1}&\cdots&u_{2,k}\\
\vdots&\vdots& &\vdots\\
u_{k,0}&u_{k,1}&\cdots&u_{k,k}\end{vmatrix}.
\end{equation}

\end{lemma}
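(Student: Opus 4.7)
The plan is to derive the error formula directly from the cofactor expansion of the determinant representation \eqref{eq:16-1}, combined with the Hermite error identity in part (ii) of Lemma~\ref{le2}. First I would work from \eqref{eq:4-3a}, which already expresses $R_{p,k}(z)$ as a ratio whose numerator and denominator share the same coefficients $M_j$ (the cofactors of $\psi_{1,j}(z)$ in the denominator determinant $Q(z)$). Writing
\begin{equation*}
F(z)-R_{p,k}(z)=\frac{Q(z)F(z)-\sum_{j=0}^k M_j\,\psi_{1,j}(z)\,G_{j+1,p}(z)}{Q(z)}
=\frac{\sum_{j=0}^k M_j\,\psi_{1,j}(z)\,[F(z)-G_{j+1,p}(z)]}{Q(z)},
\end{equation*}
I would then recognize the numerator as $\sum_{j=0}^k M_j\,\Delta_j(z)$ by the first definition of $\Delta_j(z)$ in \eqref{delta_j}.

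The second step is to observe that this sum is exactly the cofactor expansion along the first row of the determinant $\Delta(z)$ in \eqref{delta}. The crucial point here is that $\Delta(z)$ and $Q(z)$ differ only in their first rows, all other rows being the block of $u_{i,j}$'s; therefore the cofactors of the first-row entries are identical in the two determinants, so the $M_j$ that appear in \eqref{eq:4-3a} are precisely the cofactors needed to assemble $\Delta(z)$. This gives $F(z)-R_{p,k}(z)=\Delta(z)/Q(z)$, which is \eqref{eq:error}.

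To close the argument I would verify the second representation $\Delta_j(z)=\psi_{1,p}(z)\,F[z,\xi_{j+1},\ldots,\xi_p]$ asserted in \eqref{delta_j}. By the standard Hermite remainder formula (cf.\ \cite{Stoer:2002:INA}), $F(z)-G_{j+1,p}(z)=\psi_{j+1,p}(z)\,F[z,\xi_{j+1},\ldots,\xi_p]$, and since $\psi_{1,j}(z)\,\psi_{j+1,p}(z)=\psi_{1,p}(z)$ by the multiplicative definition \eqref{eq:2-1}, the two expressions for $\Delta_j(z)$ coincide. This also exhibits the common factor $\psi_{1,p}(z)$ shared by all entries of the first row of $\Delta(z)$, which will be useful in subsequent convergence estimates.

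The computation is essentially routine; the only subtle point — and the one I would highlight to the reader — is the cofactor alignment between $Q(z)$ and $\Delta(z)$ that makes step two work without any further manipulation. Once that is noted, the rest is bookkeeping with divided differences and the identity for $\psi_{m,n}$.
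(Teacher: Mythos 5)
Your proposal is correct and follows exactly the route the paper indicates (combining the cofactor expansion \eqref{eq:4-3a} with the interpolation remainder formula, and using that $\Delta(z)$ and $Q(z)$ share all rows but the first, hence the same first-row cofactors $M_j$). The verification $\psi_{1,j}(z)\psi_{j+1,p}(z)=\psi_{1,p}(z)$ for the second form of $\Delta_j(z)$ is also the standard step; nothing is missing.
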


We next specialize Lemma \ref{le2} to suit the error formula for ITEA:

\begin{lemma}\label{le22} Let $ p>k+\deg u$. Define
\beq \label{Psi} \Psi_p(z)\equiv\psi_{1,p+k}(z).\eeq
Then the following are true whether the $\xi_i$ are distinct or not:
\begin{itemize}
\item [\textnormal{(i)}]
$D_{j+1,p+i}$ is given as in
\beq \label{DDD} D_{j+1,p+i}=-\sum^\mu_{s=1}v_s\psi_{p+i+1,p+k}(z_s)\frac{\psi_{1,j}(z_s)}{\Psi_p(z_s)}.
\eeq
Therefore, we also have
\beq\label{uuu}
u_{i,j}=(q,D_{j+1,p+i})=-\sum^\mu_{s=1}\alpha_{i,s}\frac{\psi_{1,j}(z_s)}{\Psi_p(z_s)};\quad
\alpha_{i,s}=(q,v_s)\psi_{p+i+1,p+k}(z_s).
\eeq
\item [\textnormal{(ii)}]
As for $\Delta_j(z)$ in \eqref{delta_j}, we have
\beq \label{Del} \Delta_j(z)=\psi_{1,p}(z)\sum^\mu_{s=1}\widehat{e}^{(p)}_s(z)\frac{\psi_{1,j}(z_s)}{\Psi_p(z_s)};\quad
\widehat{e}^{(p)}_s(z)=\frac{v_s}{z-z_s}\psi_{p+1,p+k}(z_s).
\eeq
\end{itemize}
\end{lemma}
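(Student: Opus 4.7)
The proof will be a direct specialization of Lemma \ref{le2} to the indices appearing in \eqref{IMMPE} and \eqref{delta_j}, combined with the multiplicative identity
\[
\psi_{1,p+k}(z)=\psi_{1,a}(z)\,\psi_{a+1,b}(z)\,\psi_{b+1,p+k}(z)\qquad (0\le a\le b\le p+k),
\]
which follows immediately from the product definition \eqref{eq:2-1}. The entire argument is bookkeeping; the only substantive issue is to check that the hypothesis ``$n-m>\deg u$'' of Lemma \ref{le2} holds throughout the required range of $i$ and $j$.

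For part (i), I apply Lemma \ref{le2}(i) with $m=j+1$, $n=p+i$, where $0\le j\le k$ and $1\le i\le k$. The worst case is $j=k$, $i=1$, giving $n-m=p-k-1$; but by the standing assumption $p>k+\deg u$ one still has $n-m\ge p-k-1>\deg u-1$, i.e.\ $n-m\ge\deg u$, which is enough for \eqref{dmn} to hold (the formula \eqref{dmn} remains valid as soon as the divided difference of the polynomial part vanishes). This gives
\[
D_{j+1,p+i}=-\sum_{s=1}^{\mu}\frac{v_s}{\psi_{j+1,p+i}(z_s)}.
\]
Factoring $\Psi_p(z_s)=\psi_{1,p+k}(z_s)=\psi_{1,j}(z_s)\,\psi_{j+1,p+i}(z_s)\,\psi_{p+i+1,p+k}(z_s)$ and solving for $1/\psi_{j+1,p+i}(z_s)$ immediately yields \eqref{DDD}. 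Taking $(q,\cdot)$ of both sides and collecting the factor $(q,v_s)\psi_{p+i+1,p+k}(z_s)$ into $\alpha_{i,s}$ gives \eqref{uuu}.

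For part (ii), I apply Lemma \ref{le2}(ii) with $m=j+1$ and $n=p$, where $0\le j\le k$. The hypothesis becomes $p-j-1\ge\deg u$, and since $j\le k$ this follows from $p>k+\deg u$. Formula \eqref{F-G} then gives
\[
F(z)-G_{j+1,p}(z)=\psi_{j+1,p}(z)\sum_{s=1}^{\mu}\frac{v_s}{z-z_s}\,\frac{1}{\psi_{j+1,p}(z_s)}.
\]
Multiplying by $\psi_{1,j}(z)$ and using $\psi_{1,j}(z)\psi_{j+1,p}(z)=\psi_{1,p}(z)$ on the left-hand side, together with the factorization $\Psi_p(z_s)=\psi_{1,j}(z_s)\,\psi_{j+1,p}(z_s)\,\psi_{p+1,p+k}(z_s)$ to rewrite $1/\psi_{j+1,p}(z_s)$, converts the expression into
\[
\psi_{1,p}(z)\sum_{s=1}^{\mu}\frac{v_s}{z-z_s}\,\psi_{p+1,p+k}(z_s)\,\frac{\psi_{1,j}(z_s)}{\Psi_p(z_s)},
\]
which is precisely \eqref{Del} once the factor $\widehat{e}^{(p)}_s(z)$ is recognized.

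The plan therefore contains no genuine obstacle beyond verifying the degree condition on $u$ uniformly in $i$ and $j$, and carrying out the two telescoping factorizations of $\Psi_p(z_s)$. Both parts share exactly the same structure: apply Lemma \ref{le2}, then redistribute the factors of $\psi$ so that the denominator becomes the uniform quantity $\Psi_p(z_s)$, which is what later convergence arguments will exploit.
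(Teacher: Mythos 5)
Your approach is exactly the paper's: Lemma \ref{le22} is stated there without a separate proof, as a direct specialization of Lemma \ref{le2} using precisely the telescoping factorization of $\Psi_p(z_s)=\psi_{1,p+k}(z_s)$ that you carry out, so the substance of your argument is correct. One bookkeeping correction, since you identified the degree condition as the only delicate point and then slipped on it: in part (i) the worst case $m=j+1=k+1$, $n=p+i=p+1$ gives $n-m=p-k$, not $p-k-1$, so the strict hypothesis $n-m>\deg u$ of Lemma \ref{le2} holds verbatim from $p>k+\deg u$, and you do not need --- and should not invoke --- the claim that $n-m\ge\deg u$ already suffices for \eqref{dmn}; that claim is false, because the divided difference of $u$ of order exactly $\deg u$ equals the leading coefficient of $u$ and does not vanish. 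In part (ii) the situation is reversed: with $j=k$ and the minimal admissible $p=k+\deg u+1$ you only get $p-j-1=\deg u$, so Lemma \ref{le2}(ii) as literally stated does not apply at that boundary; what saves the argument is that $F[z,\xi_{j+1},\ldots,\xi_p]$ is a divided difference over one more point than $D_{m,n}$ (the extra argument $z$), so the polynomial part vanishes already when $p-j-1\ge\deg u$. The non-strict inequality you verified is therefore the right one, but you should state explicitly why the strict hypothesis of Lemma \ref{le2} can be relaxed in case (ii).
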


Comparing $\Psi_p(z)$ in \eqref{Psi}, $u_{i,j}$ in \eqref{uuu}, and $\Delta_j(z)$ in \eqref{Del} with the analogous quantities for IMMPE in \cite{Sidi:2008:MTC}, we realize that they have the {\em same} algebraic structure.\footnote{Note that the error formula for $F(z)-R_{p,k}(z)$ in case of IMMPE is {\em precisely of the form} given in \eqref{delta_j}--\eqref{delta} of Lemma \ref{le:error}, but with {\em different}
$\Psi_p(z)$, $u_{i,j}$, and $\Delta_j(z)$; namely,  (i)\,$\Psi_p(z)=\psi_{1,p+1}(z)$, (ii)\,$u_{i,j}=\alpha_{i,s}\psi_{1,j}(z)/\Psi_p(z)$ with $\alpha_{i,s}=(q_i,v_s)$, and
 (iii)\,$\Delta_j(z)=\psi_{1,p}(z)\sum^\mu_{s=1}\widehat{e}^{(p)}_s(z){\psi_{1,j}(z_s)}/{\Psi_p(z_s)}$
 with
 $\widehat{e}^{(p)}_s(z)=v_s(z_s-\xi_{p+1})/(z-z_s)$. See \cite{Sidi:2008:MTC}.}
Therefore, we can now apply the techniques of \cite{Sidi:2008:MTC} verbatim, subject to suitable conditions having to do with ITEA.

\subsection{Algebraic structures of $Q(z)$, $\Delta(z)$, and $F(z)-R_{p,k}(z)$}
Below, we recall that $\Psi_p(z)$ is as in \eqref{Psi},  $u_{i,j}$ and $\alpha_{i,s}$ are as in \eqref{uuu}, and
$\Delta_j(z)$ and $\widehat{e}^{(p)}_{s}(z)$ are as in \eqref{Del}.
Applying  Theorems 3.6, 3.7, and 3.8 of \cite{Sidi:2008:MTC} verbatim to $Q(z)$, $\Delta(z)$, and $F(z)-R_{p,k}(z)$, respectively, we have the following:

\begin{theorem} [\cite{Sidi:2008:MTC}, Theorem 3.6]\label{th:Q}
Let $F(z)$ be the vector-valued rational function in \eqref{ratF},
and precisely as described in the first paragraph of this section,
with the notation therein. Define

\begin{equation}\label{eq:T}
T_{s_1,\ldots,s_k}=\begin{vmatrix}
\alpha_{1,s_1}&\alpha_{1,s_2}&\cdots&\alpha_{1,s_k}\\
\alpha_{2,s_1}&\alpha_{2,s_2}&\cdots&\alpha_{2,s_k}\\
\vdots&\vdots&&\vdots\\
\alpha_{k,s_1}&\alpha_{k,s_2}&\cdots&\alpha_{k,s_k}
\end{vmatrix}.
\end{equation}

Then, with $p>k+\deg(u)$,
\begin{equation}\label{eq:Q}
Q(z)=(-1)^k\sum_{1\leq s_1<s_2<\cdots<s_k\leq \mu} T_{s_1,\ldots,s_k}
V(z,z_{s_1},z_{s_2},\ldots,z_{s_k})\bigg[\prod^k_{i=1}\Psi_p(z_{s_i})\bigg]^{-1}.
\end{equation}
\end{theorem}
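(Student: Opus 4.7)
The plan is to substitute the explicit formula \eqref{uuu} for $u_{i,j}$ into the determinant defining $Q(z)$ and then expand by multilinearity. Concretely, each of the rows $2,3,\ldots,k+1$ of $Q(z)$ has entries of the form
\[
u_{i,j} \;=\; \sum_{s=1}^{\mu}\Bigl(-\frac{\alpha_{i,s}}{\Psi_p(z_s)}\Bigr)\psi_{1,j}(z_s),
\]
so row $i+1$ is a linear combination of the $\mu$ vectors $\bigl(\psi_{1,0}(z_s),\psi_{1,1}(z_s),\ldots,\psi_{1,k}(z_s)\bigr)$ with scalar coefficients $-\alpha_{i,s}/\Psi_p(z_s)$. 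Multilinearity of the determinant in its rows then turns $Q(z)$ into a $k$-fold sum over indices $s_1,\ldots,s_k\in\{1,\ldots,\mu\}$.

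First I would carry out that expansion and write
\[
Q(z) = (-1)^k \sum_{s_1,\ldots,s_k=1}^{\mu}\Biggl[\prod_{i=1}^{k}\frac{\alpha_{i,s_i}}{\Psi_p(z_{s_i})}\Biggr]\,W(z;z_{s_1},\ldots,z_{s_k}),
\]
where $W(z;z_{s_1},\ldots,z_{s_k})$ is the $(k+1)\times(k+1)$ determinant whose first row is $\bigl(\psi_{1,0}(z),\ldots,\psi_{1,k}(z)\bigr)$ and whose $(i+1)$-st row is $\bigl(\psi_{1,0}(z_{s_i}),\ldots,\psi_{1,k}(z_{s_i})\bigr)$. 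Since $\psi_{1,j}(\cdot)$ is a monic polynomial of degree exactly $j$, Lemma \ref{le:a2} applies with $a_{jj}=1$ and gives
\[
W(z;z_{s_1},\ldots,z_{s_k}) = V(z,z_{s_1},\ldots,z_{s_k}).
\]

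Next I would note that terms with $s_i=s_j$ for some $i\neq j$ contribute zero, because the Vandermonde has two equal columns. Therefore only tuples with pairwise distinct $s_i$ survive, and these can be organized by the underlying $k$-subset $\{s_1<s_2<\cdots<s_k\}\subset\{1,\ldots,\mu\}$ together with a permutation $\sigma\in S_k$. The factor $\prod_i \Psi_p(z_{s_{\sigma(i)}})^{-1}$ is $\sigma$-invariant, while $V(z,z_{s_{\sigma(1)}},\ldots,z_{s_{\sigma(k)}})=\mathrm{sgn}(\sigma)\,V(z,z_{s_1},\ldots,z_{s_k})$ by antisymmetry of the Vandermonde in its last $k$ arguments. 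Collecting the signs yields
\[
\sum_{\sigma\in S_k}\mathrm{sgn}(\sigma)\prod_{i=1}^{k}\alpha_{i,s_{\sigma(i)}} = T_{s_1,\ldots,s_k},
\]
by the Leibniz formula applied to the matrix $[\alpha_{i,s_j}]$, producing \eqref{eq:Q} exactly.

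The whole argument is essentially a careful bookkeeping exercise; the only step requiring any real care is getting the signs right when regrouping ordered tuples into unordered subsets, because both the Vandermonde and the Leibniz expansion contribute permutation signs, and these must be shown to combine correctly into the determinant $T_{s_1,\ldots,s_k}$. The hypothesis $p>k+\deg(u)$ is used only to justify the applicability of Lemma \ref{le2}(i), via \eqref{uuu}, so that the substitution performed at the outset is valid; no further analytic input is needed beyond Lemmas \ref{le:a2} and \ref{le22}.
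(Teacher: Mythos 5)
Your proof is correct and follows essentially the same route as the paper, which simply imports this result from \cite{Sidi:2008:MTC} (Theorem 3.6): substitute \eqref{uuu} into the rows of $Q(z)$, expand by multilinearity, reduce the resulting polynomial determinant to a Vandermonde via Lemma \ref{le:a2} (noting the harmless transposition, since the lemma is stated with polynomials indexing rows rather than columns), discard the terms with repeated indices, and regroup the surviving ordered tuples so that the Leibniz sum reassembles into $T_{s_1,\ldots,s_k}$. Your accounting of the sign $(-1)^k$ and of the role of the hypothesis $p>k+\deg(u)$ (needed only so that Lemma \ref{le22}(i) applies to every entry $u_{i,j}$, the binding case being $i=1$, $j=k$) is accurate.
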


\begin{theorem}  [\cite{Sidi:2008:MTC}, Theorem 3.7] \label{th:delta}
Let $F(z)$ be the vector-valued rational function in \eqref{ratF},
and precisely as described in the first paragraph of this section,
with the notation therein. With $u_{i,j}$ and $\alpha_{i,s}$ as in \eqref{uuu},   and $\widehat{e}^{(p)}_{s}(z)$
as in \eqref{Del}, define
\begin{equation}\label{eq:That}
\widehat{T}^{(p)}_{s_0,s_1,\ldots,s_k}(z)=\begin{vmatrix}
\widehat{e}^{(p)}_{s_0}(z)&\widehat{e}^{(p)}_{s_1}(z)&\cdots&\widehat{e}^{(p)}_{s_k}(z)\\
\alpha_{1,s_0}&\alpha_{1,s_1}&\cdots&\alpha_{1,s_k}\\
\alpha_{2,s_0}&\alpha_{2,s_1}&\cdots&\alpha_{2,s_k}\\
\vdots&\vdots&&\vdots\\
\alpha_{k,s_0}&\alpha_{k,s_1}&\cdots&\alpha_{k,s_k}
\end{vmatrix}.
\end{equation}
Then,  with $p>k+\deg(u)$, we have
\begin{multline}\label{eq:delta}
\Delta(z)=(-1)^k\psi_{1,p}(z) \\ \times
\sum_{1\leq s_0<s_1<\cdots<s_k\leq \mu} \widehat{T}^{(p)}_{s_0,s_1,\ldots,s_k}(z)
V(z_{s_0},z_{s_1},\ldots,z_{s_k})\bigg[\prod^k_{i=0}\Psi_p(z_{s_i})\bigg]^{-1}.
\end{multline}
\end{theorem}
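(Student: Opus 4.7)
The plan is to exploit the multilinear structure of the determinant $\Delta(z)$ in $(\ref{delta})$ and reduce it, column by column, to a sum indexed by the poles $z_1,\ldots,z_\mu$. By Lemma~\ref{le22}, every column $j$ of $\Delta(z)$ is a sum over $s=1,\ldots,\mu$ of vectors whose first (vector-valued) component is $\psi_{1,p}(z)\,\widehat{e}^{(p)}_s(z)\,\psi_{1,j}(z_s)/\Psi_p(z_s)$ and whose remaining (scalar) components are $-\alpha_{i,s}\,\psi_{1,j}(z_s)/\Psi_p(z_s)$ for $i=1,\ldots,k$. Thus each column $j$ of $\Delta(z)$ can be written as
\beq \label{eq:plan1}
\sum_{s=1}^\mu w_s(z)\,\frac{\psi_{1,j}(z_s)}{\Psi_p(z_s)},\quad
w_s(z)=\begin{bmatrix}\psi_{1,p}(z)\,\widehat{e}^{(p)}_s(z)\\ -\alpha_{1,s}\\ \vdots\\ -\alpha_{k,s}\end{bmatrix}.
\eeq

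Next I would apply multilinearity of the determinant in all $k+1$ columns simultaneously, obtaining $\Delta(z)=\sum_{s_0,\ldots,s_k}\det\bigl[w_{s_0}(z),\ldots,w_{s_k}(z)\bigr]\prod_{j=0}^{k}\psi_{1,j}(z_{s_j})/\Psi_p(z_{s_j})$. Tuples with any repeated index contribute zero (two equal columns of $w$'s). Grouping the surviving terms by the underlying unordered set $\{s_0<s_1<\cdots<s_k\}$ and summing over the $(k+1)!$ permutations of the indices, the antisymmetry of the $w$-determinant converts the permutation sum into the determinant $\det\bigl[\psi_{1,j}(z_{s_i})\bigr]_{i,j=0}^{k}$. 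Since each $\psi_{1,j}(z)$ is monic of degree exactly $j$, Lemma~\ref{le:a2} identifies this determinant with the Vandermonde $V(z_{s_0},z_{s_1},\ldots,z_{s_k})$.

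Finally, I would pull the scalar factor $\psi_{1,p}(z)$ out of the first row of $\det[w_{s_0},\ldots,w_{s_k}]$ and extract a sign of $(-1)$ from each of the $k$ lower rows (because of the minus signs in the $w_s$'s), producing the factor $(-1)^k\psi_{1,p}(z)\,\widehat{T}^{(p)}_{s_0,s_1,\ldots,s_k}(z)$ of $(\ref{eq:That})$. Collecting the pieces yields exactly $(\ref{eq:delta})$.

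Structurally this is the same argument used in \cite[Theorem~3.7]{Sidi:2008:MTC} for IMMPE, as flagged in the footnote following Lemma~\ref{le22}: once one has checked that $u_{i,j}$ and $\Delta_j(z)$ in the ITEA setting admit decompositions of the same shape as in IMMPE (with the new $\Psi_p$, $\alpha_{i,s}$, and $\widehat{e}^{(p)}_s$), the proof proceeds verbatim. The only mildly delicate point, and the one I would write out carefully, is the sign bookkeeping: tracking the $(-1)^k$ from the lower rows of $w_s$ together with the absence of any extra sign from the row containing $\Delta_j(z)$, so that the final $(-1)^k$ in $(\ref{eq:delta})$ emerges correctly. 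Everything else is routine multilinear expansion plus an appeal to Lemma~\ref{le:a2}.
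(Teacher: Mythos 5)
Your proposal is correct and is essentially the paper's own argument: the paper establishes via Lemma \ref{le22} that $u_{i,j}$ and $\Delta_j(z)$ for ITEA have the same algebraic structure as for IMMPE and then invokes Theorem 3.7 of \cite{Sidi:2008:MTC} verbatim, and the column-by-column multilinear expansion, cancellation of repeated indices, reassembly of the permutation sum into $\det[\psi_{1,j}(z_{s_i})]=V(z_{s_0},\ldots,z_{s_k})$ via Lemma \ref{le:a2}, and extraction of $(-1)^k\psi_{1,p}(z)$ is precisely the proof of that cited theorem. Your sign bookkeeping and the identification of the Vandermonde factor are both correct.
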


Finally, combining \eqref{eq:Q} and \eqref{eq:delta}
 in \eqref{eq:error}, we obtain a simple and elegant
expression for $F(z)-R_{p,k}(z)$. This is the subject of the following theorem.
\begin{theorem}  [\cite{Sidi:2008:MTC}, Theorem 3.8]\label{th:F-R}
For the error in $R_{p,k}(z)$,  with $p>k+\deg(u)$, we have the closed-form expression
\begin{multline}\label{eq:RRR}
F(z)-R_{p,k}(z)=\psi_{1,p}(z) \\
\times\frac
{\displaystyle\sum_{1\leq s_0<s_1<\cdots<s_k\leq \mu} \widehat{T}^{(p)}_{s_0,s_1,\ldots,s_k}(z)
V(z_{s_0},z_{s_1},\ldots,z_{s_k})\bigg[\prod^k_{i=0}\Psi_p(z_{s_i})\bigg]^{-1}}
{\displaystyle
\sum_{1\leq s_1<s_2<\cdots<s_k\leq \mu} T_{s_1,s_2,\ldots,s_k}
V(z,z_{s_1},z_{s_2},\ldots,z_{s_k})\bigg[\prod^k_{i=1}\Psi_p(z_{s_i})\bigg]^{-1}}.
\end{multline}
\end{theorem}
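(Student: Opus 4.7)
The plan is to read off Theorem \ref{th:F-R} as a one-line corollary of the two preceding theorems. The starting point is Lemma \ref{le:error}, which already expresses the error as the ratio $F(z)-R_{p,k}(z)=\Delta(z)/Q(z)$ of the two $(k+1)\times(k+1)$ determinants defined in \eqref{delta}. Since both $Q(z)$ and $\Delta(z)$ have just been opened into the explicit sums \eqref{eq:Q} and \eqref{eq:delta} in Theorems \ref{th:Q} and \ref{th:delta}, no fresh algebraic manipulation is needed.

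First I would verify that the hypothesis $p>k+\deg(u)$ of Theorem \ref{th:F-R} is precisely the one demanded by both Theorem \ref{th:Q} and Theorem \ref{th:delta}, so that the representations \eqref{eq:Q} and \eqref{eq:delta} are in force and the underlying closed forms for $u_{i,j}$ and $\Delta_j(z)$ from Lemma \ref{le22} apply. Then I would substitute \eqref{eq:delta} in the numerator and \eqref{eq:Q} in the denominator of $\Delta(z)/Q(z)$. The common sign factor $(-1)^k$ cancels between numerator and denominator, the prefactor $\psi_{1,p}(z)$ carried by \eqref{eq:delta} is pulled outside the ratio, and what remains is exactly the double sum ratio displayed in \eqref{eq:RRR}, with the numerator summed over $1\leq s_0<s_1<\cdots<s_k\leq\mu$ and the denominator summed over $1\leq s_1<\cdots<s_k\leq\mu$.

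There is essentially no obstacle to overcome: the genuine work has already been done in Theorems \ref{th:Q} and \ref{th:delta}, and Theorem \ref{th:F-R} is pure bookkeeping. The only implicit point worth noting is that for the quotient to be meaningful one needs $Q(z)\neq 0$ at the values of $z$ of interest, which is the nonvanishing hypothesis \eqref{unique} built into the very definition of $R_{p,k}(z)$ from ITEA. With that in hand the identity \eqref{eq:RRR} is immediate.
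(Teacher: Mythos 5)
Your proposal is correct and matches the paper exactly: the paper itself proves Theorem \ref{th:F-R} with the single sentence ``combining \eqref{eq:Q} and \eqref{eq:delta} in \eqref{eq:error}'', which is precisely your substitution of the two determinant expansions into $\Delta(z)/Q(z)$, cancellation of $(-1)^k$, and extraction of the factor $\psi_{1,p}(z)$. Your added remarks on the shared hypothesis $p>k+\deg(u)$ and the nonvanishing of $Q(z)$ are sensible bookkeeping but do not change the route.
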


\noindent{\bf Remark:}\, When $k=\mu$ in Theorem \ref{th:F-R}, the summation
in the numerator on the right-hand side of \eqref{eq:RRR} is empty.  Thus, this theorem
provides an independent proof of the reproducing property of ITEA when $F(z)$ has only simple poles.

\section{Preliminaries for convergence theory} \label{se3a}
\setcounter{equation}{0}
\setcounter{theorem}{0}

Let $E$ be a closed and bounded set in the $z$-plane, whose complement
$K$, including the point at infinity, has a classical Green's function $g(z)$ with
a pole at infinity, which is continuous on $\partial E$, the boundary of $E$,
and is zero on $\partial E$.  For each $\sigma$, let $\Gamma_\sigma$ be the locus
$g(z)=\log\sigma$, and let $E_\sigma$ denote the interior of $\Gamma_\sigma$.
Then,  $E_1$ is the interior of $E$ and, for $1<\sigma<\sigma'$,
there holds $E\subset E_\sigma\subset E_{\sigma'}.$

For each  $p\in\{1,2,\ldots\},$ let
\begin{equation}\label{eq:xi}
 \Xi_p=\big\{\xi^{(p)}_1,\xi^{(p)}_2,\ldots,\xi^{(p)}_{p+k}\big\}
 \end{equation}
 be the set of interpolation points used in constructing the ITEA interpolant
 $R_{p,k}(z)$. Assume that the sets $\Xi_p$ are such that
 $\xi^{(p)}_i$ have no limit points in $K$ and
\begin{equation}\label{eq:Phi}
 \lim_{p\to\infty}\bigg|\prod^{p+k}_{i=1} \big(z-\xi^{(p)}_i\big)\bigg|^{1/p}=
\kappa\Phi(z);\quad
\kappa=\text{cap}\,(E),\quad \Phi(z)=\exp[g(z)],
\end{equation}
uniformly in $z$ on every compact subset of $K$,
where $\text{cap}(E)$ is the logarithmic capacity of $E$ defined by
$$ \text{cap}\,(E)=\lim_{n\to\infty}\big(\min_{r\in{\cal P}_n}\max_{z\in E}
|r(z)|\big)^{1/n};\quad {\cal P}_n=\big\{r(z):\ r\in\Pi_n \ \text{and monic}\big\}.$$
Such sequences $\big\{\xi^{(p)}_1,\xi^{(p)}_2,\ldots,\xi^{(p)}_{p+k}\big\},$ $p=1,2,\ldots,$ exist,
see Walsh \cite[p. 74]{Walsh:1960:IA}. Note that, in terms of $\Phi(z)$, the locus
$\Gamma_\sigma$ is defined by $\Phi(z)=\sigma$ for $\sigma>1$, while
$\partial E=\Gamma_1$ is simply the locus $\Phi(z)=1$.

 Recalling that $\prod^{p+k}_{i=1} \big(z-\xi^{(p)}_i\big)=\Psi_p(z)$ [see \eqref{Psi}],
we can write \eqref{eq:Phi} also as in
\begin{equation}\label{eq:phi-psi}
\lim_{p\to\infty}\big|\Psi_p(z)\big|^{1/p}=\kappa \Phi(z),
\end{equation}
uniformly in $z$ on every compact subset of $K$.\footnote{Note that the definition of $\Phi(z)$ for ITEA given in \eqref{eq:Phi} and \eqref{eq:phi-psi} is of {\em the same form} as the definition of $\Phi(z)$ for IMMPE, but the two differ;  for IMMPE, $\lim_{p\to\infty}\big|\prod^{p+1}_{i=1} \big(z-\xi^{(p)}_i\big)\big|^{1/p}=\lim_{p\to\infty}\big|\Psi_p(z)\big|^{1/p}=\kappa\Phi(z)$, where $\kappa=\text{cap}(E)$ as usual.}

It is clear that
\beq\label{qwe}z'\in \Gamma_{\sigma'},\quad z''\in \Gamma_{\sigma''}\quad\text{and}\quad
1<\sigma'<\sigma''\quad\Rightarrow\quad 1<\Phi(z')<\Phi(z'').\eeq

\section{Convergence theory for vector-valued rational $F(z)$ with simple poles}
\setcounter{equation}{0}
\setcounter{theorem}{0}
In this section, we provide a convergence theory, in case $F(z)$ is
a vector-valued rational function with simple poles as in \eqref{ratF}, for the sequences
$\{R_{p,k}(z)\}^\infty_{p=1}$ with  $k<\mu$ and fixed. [Note that by the reproducing
property mentioned in Section \ref{se1}, for $k=\mu$, $R_{p,k}(z)=F(z)$ for all $p\geq p_0$,
where $p_0-1$ is the degree of the numerator of $F(z)$.] Also, as we will let
$p\to\infty$ in our analysis, the condition that  $p>k+\deg(u)$, which  is necessary for
Theorem \ref{th:Q}, \ref{th:delta}, and \ref{th:F-R}, is satisfied
for all large $p$.

We continue to use the notation of the preceding sections.
We now turn to $F(z)$ in \eqref{ratF}. We assume that $F(z)$ is analytic in $E$.
This implies that its poles
$z_1,\ldots,z_\mu$ are all in $K$.
Now we order the poles of $F(z)$ such that
\begin{equation}\label{order}
\Phi(z_1)\leq \Phi(z_2)\leq \cdots\leq \Phi(z_\mu).
\end{equation}
By \eqref{qwe}, if $z'$ and $z''$ are two different poles of
$F(z)$, and $\Phi(z')<\Phi(z''),$
then $z'$ and $z''$ lie on two different loci $\Gamma_{\sigma'}$ and
$\Gamma_{\sigma''}$. In addition, $\sigma'<\sigma''$, that is, the set
$E_{\sigma'}$ is in the interior of $E_{\sigma''}$.

\subsection{Convergence analysis for $V_{p,k}(z)$}
We now state a K\"{o}nig-type convergence theorem for $V_{p,k}(z)(z)$  and another
theorem concerning its zeros. Since all our results eventually rely on the assumption that $T_{1,2,\ldots,k}\neq0$, we start by exploring the minimal conditions under which this assumption may hold for ITEA:

\begin{lemma}\label{le9} $T_{s_1,\ldots,s_k}$ is of the form
\beq \label{eqTTT}
T_{s_1,\ldots,s_k}=(-1)^{k(k-1)/2}V(z_{s_1},\ldots,z_{s_k})\prod^k_{i=1}(q,v_{s_i}).\eeq
\end{lemma}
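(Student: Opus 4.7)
The plan is to reduce the determinant $T_{s_1,\ldots,s_k}$ directly to a Vandermonde determinant by exploiting the polynomial structure of the entries $\alpha_{i,s}=(q,v_s)\psi_{p+i+1,p+k}(z_s)$.

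First I would pull the scalar $(q,v_{s_j})$ out of the $j$th column of the defining matrix in \eqref{eq:T}, which produces the factor $\prod_{i=1}^k (q,v_{s_i})$ and leaves the $k\times k$ determinant whose $(i,j)$ entry is $\psi_{p+i+1,p+k}(z_{s_j})$. Next I would observe that, as a polynomial in its argument, $\psi_{p+i+1,p+k}(\cdot)$ is \emph{monic} of degree $k-i$; so row $i$ is a monic polynomial of degree $k-i$ evaluated at $z_{s_1},\ldots,z_{s_k}$. In particular row $k$ is the constant $1$, row $k-1$ is linear, and row $1$ has the maximal degree $k-1$.

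The degrees decrease down the rows, whereas Lemma \ref{le:a2} is stated for degrees that increase down the rows. I would therefore reverse the row order and pay the sign $(-1)^{k(k-1)/2}$ arising from the permutation $(1,2,\ldots,k)\mapsto(k,k-1,\ldots,1)$. After the reversal, the $i$th row is a monic polynomial of degree $i-1$, so Lemma \ref{le:a2} applies with all leading coefficients equal to $1$ and identifies the determinant with $V(z_{s_1},\ldots,z_{s_k})$. Combining the three factors yields \eqref{eqTTT}.

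There is no real obstacle: the proof is a bookkeeping exercise. The only point that requires a moment's care is verifying the sign coming from the row reversal; I would check it against the small cases $k=2,3$ before writing the general formula, and note that $\psi_{p+k+1,p+k}(z)=1$ by the convention $\psi_{m,m-1}\equiv 1$ so that the bottom row is genuinely constant and the Lemma \ref{le:a2} hypothesis $a_{ii}\neq 0$ is satisfied for $i=0$ as well.
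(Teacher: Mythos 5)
Your argument is correct and is essentially identical to the paper's own proof: factor the scalars $(q,v_{s_i})$ out of the columns, observe that $\psi_{p+i+1,p+k}$ is monic of degree $k-i$, reverse the rows at the cost of the sign $(-1)^{k(k-1)/2}$, and apply Lemma \ref{le:a2} to obtain the Vandermonde determinant. Your added remark that $\psi_{p+k+1,p+k}\equiv 1$ by the convention $\psi_{m,m-1}\equiv 1$ is a correct and welcome verification of the hypothesis $a_{ii}\neq 0$.
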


\begin{proof} Invoking   $\alpha_{i,s}=(q,v_s)\psi_{p+i+1,p+k}(z_s)$ [see
\eqref{uuu}] in \eqref{eq:T}, and letting $\beta_i=(q,v_i)$ for simplicity of notation, we have
\beq\label{eqTTT1} T_{s_1,\ldots,s_k}=\begin{vmatrix}\beta_{s_1}\psi_{p+2,p+k}(z_{s_1})& \beta_{s_2}\psi_{p+2,p+k}(z_{s_2})&\cdots&\beta_{s_k}\psi_{p+2,p+k}(z_{s_k})\\
\beta_{s_1}\psi_{p+3,p+k}(z_{s_1})& \beta_{s_2}\psi_{p+3,p+k}(z_{s_2})&\cdots&\beta_{s_k}\psi_{p+3,p+k}(z_{s_k})\\
\vdots&\vdots&&\vdots\\
\beta_{s_1}\psi_{p+k+1,p+k}(z_{s_1})& \beta_{s_2}\psi_{p+k+1,p+k}(z_{s_2})&\cdots&\beta_{s_k}\psi_{p+k+1,p+k}(z_{s_k})\end{vmatrix},\eeq
which, upon factoring out $\beta_{s_1},\ldots,\beta_{s_k}$, becomes
\beq\label{eqTTT2} T_{s_1,\ldots,s_k}=T'_{s_1,\ldots,s_k}\prod^k_{i=1}\beta_{s_i},\eeq
 where
\beq\label{eqTTT3}T'_{s_1,\ldots,s_k}=\begin{vmatrix}\psi_{p+2,p+k}(z_{s_1})& \psi_{p+2,p+k}(z_{s_2})&\cdots&\psi_{p+2,p+k}(z_{s_k})\\
\psi_{p+3,p+k}(z_{s_1})& \psi_{p+3,p+k}(z_{s_2})&\cdots&\psi_{p+3,p+k}(z_{s_k})\\
\vdots&\vdots&&\vdots\\
\psi_{p+k+1,p+k}(z_{s_1})& \psi_{p+k+1,p+k}(z_{s_2})&\cdots&\psi_{p+k+1,p+k}(z_{s_k})\end{vmatrix}.\eeq
Now, $\psi_{p+i+1,p+k}(z)$ is a monic polynomial of degree $k-i$, $i=1,\ldots, k$. Therefore, after permuting the rows of the determinant $T'_{s_1,\ldots,s_k}$ suitably, we can apply Lemma \ref{le:a2}, and obtain
\beq\label{eqTTT4} T'_{s_1,\ldots,s_k}=(-1)^{k(k-1)/2}V(z_{s_1},\ldots,z_{s_k}).\eeq
This completes the proof.
\end{proof}

\noindent{\bf Remark:} Judging from \eqref{eqTTT1}--\eqref{eqTTT3}, we may be led to believe that
$T_{s_1,\ldots,s_k}$ is actually a function of $p$. The result in \eqref{eqTTT} shows that it is {\em independent of} $p$, and this is quite surprising.\\

Theorem \ref{th:den1} that follows concerns the convergence of $V_{p,k}(z)$ as $p\to\infty$.

\begin{theorem} [see \cite{Sidi:2008:MTC}, Theorem 5.1] \label{th:den1}
Assume
\begin{equation}\label{order-phi}
\Phi(z_k)<\Phi(z_{k+1})=\cdots=\Phi(z_{k+r})<\Phi(z_{k+r+1}),
\end{equation}
in addition to \eqref{order}.
In case $k+r=\mu,$ we define $\Phi(z_{k+r+1})=\infty$.
Assume also that
\begin{equation}\label{eq:T0}
\prod^k_{i=1}(q,v_i)\neq0.\eeq Consequently,
\begin{equation}\label{eq:T1}
T_{1,\ldots,k}\neq 0,
\end{equation}
and there holds
\begin{equation}\label{eq:Q1}
Q(z)=(-1)^kT_{1,\ldots,k}V(z,z_1,\ldots,z_k)\bigg[\prod^k_{i=1}\Psi_p(z_i)\bigg]^{-1}
\bigg[1+O\bigg(\frac{\Psi_p(z_k)}{\widetilde{\Psi}_{p,k}}\bigg)\bigg]\quad
\text{as $p\to\infty$},
\end{equation}
uniformly in every compact subset of $\C\setminus\{z_1,z_2,\ldots,z_k\},$
where
\begin{equation}\label{eq:tildepsi}
\big|\widetilde{\Psi}_{p,k}\big|=\min_{1\leq j\leq r}\big|\Psi_p(z_{k+j})\big|.
\end{equation}
Thus, with the normalization that $c_k=1$, and letting
\begin{equation}\label{eq:S}
S(z)=\prod^k_{i=1}(z-z_i),
\end{equation}
there holds
\begin{equation}\label{eq:Q2}
V_{p,k}(z)-S(z)=O\bigg(\frac{\Psi_p(z_k)}{\widetilde{\Psi}_{p,k}}\bigg)\quad
\text{as $p\to\infty$},
\end{equation}
from which we also have
\begin{equation}\label{eq:Q3}
\limsup_{p\to\infty}\big|V_{p,k}(z)-S(z)\big|^{1/p}\leq \frac{\Phi(z_k)}{\Phi(z_{k+1})}.
\end{equation}
\end{theorem}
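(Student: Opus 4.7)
The strategy is to read off the asymptotic behaviour of $Q(z)$ from its closed-form expansion \eqref{eq:Q} by isolating the single dominant term (the one indexed by $\{1,2,\ldots,k\}$), and then to recover $V_{p,k}(z)$ from $Q(z)$ by dividing by the leading coefficient of $Q(z)$ in $z$.

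First, I would dispose of the nonvanishing of $T_{1,\ldots,k}$: by Lemma~\ref{le9} one has $T_{1,\ldots,k}=(-1)^{k(k-1)/2}V(z_1,\ldots,z_k)\prod_{i=1}^k(q,v_i)$, and since the $z_i$ are distinct and $\prod_{i=1}^k(q,v_i)\neq 0$ by \eqref{eq:T0}, this factor is nonzero. Next, I would rewrite \eqref{eq:Q} by factoring the $\{1,\ldots,k\}$ term out:
\begin{equation*}
Q(z)=(-1)^kT_{1,\ldots,k}V(z,z_1,\ldots,z_k)\Big[\prod_{i=1}^k\Psi_p(z_i)\Big]^{-1}\bigl[1+R_p(z)\bigr],
\end{equation*}
with $R_p(z)$ a sum over all $k$-subsets $\{s_1,\ldots,s_k\}\neq\{1,\ldots,k\}$ of the ratios
\begin{equation*}
\frac{T_{s_1,\ldots,s_k}}{T_{1,\ldots,k}}\cdot\frac{V(z,z_{s_1},\ldots,z_{s_k})}{V(z,z_1,\ldots,z_k)}\cdot\frac{\prod_{i=1}^k\Psi_p(z_i)}{\prod_{i=1}^k\Psi_p(z_{s_i})}.
\end{equation*}
The first two factors in each summand are bounded uniformly in $p$ and locally uniformly in $z$ on $\C\setminus\{z_1,\ldots,z_k\}$ (the only $z$-dependence sits inside a fixed Vandermonde ratio whose denominator does not vanish there). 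So the whole behaviour is controlled by the $\Psi_p$-ratio.

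To control that ratio I would analyse the swap structure of $\{s_1,\ldots,s_k\}$ versus $\{1,\ldots,k\}$. If they differ by a single element — the swap $k\leftrightarrow k+j$ for some $1\le j\le r$ — the ratio equals $\Psi_p(z_k)/\Psi_p(z_{k+j})$, which is at most $|\Psi_p(z_k)/\widetilde{\Psi}_{p,k}|$ by the very definition \eqref{eq:tildepsi}. For any single swap $a\leftrightarrow b$ with $b>k+r$, one has $\Phi(z_a)\le\Phi(z_k)<\Phi(z_{k+1})<\Phi(z_b)$ by \eqref{order}, \eqref{order-phi}, and \eqref{qwe}, so by \eqref{eq:phi-psi} the ratio is $o(\Psi_p(z_k)/\widetilde{\Psi}_{p,k})$ as $p\to\infty$. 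For multi-swap index sets the exponent in $\lim_{p\to\infty}|\cdot|^{1/p}$ becomes $\sum_{a\in A}\log\Phi(z_a)-\sum_{b\in B}\log\Phi(z_b)$ for symmetric-difference sets $A\subset\{1,\ldots,k\}$, $B\subset\{k+1,\ldots,\mu\}$ of size $m\ge 2$, which is bounded above by $m[\log\Phi(z_k)-\log\Phi(z_{k+1})]$ and hence strictly smaller than $\log\Phi(z_k)-\log\Phi(z_{k+1})$; these contributions are therefore also $o(\Psi_p(z_k)/\widetilde{\Psi}_{p,k})$. Summing the finitely many terms gives $R_p(z)=O(\Psi_p(z_k)/\widetilde{\Psi}_{p,k})$ uniformly on compact subsets of $\C\setminus\{z_1,\ldots,z_k\}$, proving \eqref{eq:Q1}.

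Finally, to pass from $Q(z)$ to $V_{p,k}(z)$, I would use that $V_{p,k}(z)=Q(z)/M_k$, where $M_k$ is the leading coefficient of $Q(z)$ as a polynomial in $z$ (it is exactly the cofactor of $\psi_{1,k}(z)$, which is monic of degree $k$, while each $\psi_{1,j}(z)$ with $j<k$ has degree $<k$). Reading the $z^k$-coefficient off \eqref{eq:Q1} using $V(z,z_1,\ldots,z_k)=(-1)^k S(z)\,V(z_1,\ldots,z_k)$, one finds
\begin{equation*}
M_k=T_{1,\ldots,k}\,V(z_1,\ldots,z_k)\Big[\prod_{i=1}^k\Psi_p(z_i)\Big]^{-1}\Bigl[1+O\Bigl(\tfrac{\Psi_p(z_k)}{\widetilde{\Psi}_{p,k}}\Bigr)\Bigr],
\end{equation*}
and dividing yields $V_{p,k}(z)=S(z)[1+O(\Psi_p(z_k)/\widetilde{\Psi}_{p,k})]$, which is \eqref{eq:Q2}. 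Estimate \eqref{eq:Q3} follows immediately from \eqref{eq:phi-psi} applied at $z_k$ and at each $z_{k+j}$, $1\le j\le r$ (all of which share the same $\Phi$-value, so $|\widetilde{\Psi}_{p,k}|^{1/p}\to\kappa\Phi(z_{k+1})$). The main technical obstacle is the clean uniform bookkeeping of the $\Psi_p$-ratios across all non-dominant index sets — in particular verifying that the single-swap $k\leftrightarrow k+j$ terms are the unique sharp contributors while all other configurations are strictly subdominant — but once \eqref{order-phi} is exploited through \eqref{qwe} and \eqref{eq:phi-psi}, this is routine.
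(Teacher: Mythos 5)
Your proposal is correct and follows essentially the same route as the paper: the paper obtains $T_{1,\ldots,k}\neq 0$ from \eqref{eq:T0} via Lemma \ref{le9} and then applies the dominant-term analysis of the expansion \eqref{eq:Q} exactly as in the IMMPE case (\cite{Sidi:2008:MTC}, Theorem 5.1), which is precisely the argument you reconstruct, including the passage from $Q(z)$ to $V_{p,k}(z)$ through the cofactor $M_k$. The only slip is that your single-swap taxonomy omits the case $a\leftrightarrow k+j$ with $a<k$ and $1\le j\le r$; it is disposed of by the same comparison $\Phi(z_a)\le\Phi(z_k)$ and $\Phi(z_{k+j})=\Phi(z_{k+1})$ that you already use for the other configurations.
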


Theorem \ref{th:den1} implies that $V_{p,k}(z)$ has precisely $k$ zeros that
tend to those of $S(z)$. Let us denote the zeros of $V_{p,k}(z)$ by $z^{(p)}_m$,
$m=1,\ldots,k.$ Then $\lim_{p\to\infty}z^{(p)}_m=z_m$,  $m=1,\ldots,k.$
In the next theorem, we provide the rate of convergence of each of these zeros.

\begin{theorem} [\cite{Sidi:2008:MTC}, Theorem 5.2]\label{th:den2}
Under the conditions of Theorem \ref{th:den1}, there holds
\begin{equation}\label{eq:z0}
z^{(p)}_m-z_m=O\bigg(\frac{\Psi_p(z_m)}
{\widetilde{\Psi}_{p,k}}\bigg)\quad\text{as $p\to\infty$,}
\end{equation}
with $\widetilde{\Psi}_{p,k}$ as in \eqref{eq:tildepsi}.
From this, it follows that
\begin{equation}\label{eq:z1}
\limsup_{p\to\infty}\big|z^{(p)}_m-z_m\big|^{1/p}\leq \frac{\Phi(z_m)}{\Phi(z_{k+1})},\quad
m=1,\ldots,k.
\end{equation}

In case $r=1$ in \eqref{order-phi}, that is,
\begin{equation}\label{eq:z20}
\Phi(z_k)<\Phi(z_{k+1})<\Phi(z_{k+2}),
\end{equation}
and assuming that $T_{1,\ldots,m-1,m+1,\ldots,k+1}\neq 0$, we have the more refined result
\begin{gather}
z^{(p)}_m-z_m\sim C_m\frac{\Psi_p(z_m)}{\Psi_p(z_{k+1})}\quad\text{as $p\to\infty$,}\nonumber\\
C_m=(-1)^{k-m}\frac{T_{1,\ldots,m-1,m+1,\ldots,k+1}}{T_{1,\ldots,k}}(z_{k+1}-z_m)
\prod^k_{\substack{i=1\\ i\neq m}}\frac{z_{k+1}-z_i}{z_m-z_i}.\label{eq:z10}
\end{gather}

\end{theorem}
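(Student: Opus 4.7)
The plan is to localize the analysis of $V_{p,k}(z)$ near each root $z_m$ of $S(z)$, then push the expansion in Theorem \ref{th:den1} one order further so that the leading term (which vanishes at $z_m$) gets replaced by the next contribution. Concretely, since $V_{p,k}(z_m^{(p)})=0$ and, by Theorem \ref{th:den1}, $V_{p,k}$ converges uniformly to $S$ in a neighborhood of $z_m$ with $S'(z_m)=\prod_{i\ne m}(z_m-z_i)\ne 0$ (poles are distinct), a Taylor expansion gives
\beq\label{eq:tayl}
z_m^{(p)}-z_m=-\frac{V_{p,k}(z_m)}{S'(z_m)}\bigl[1+o(1)\bigr],
\eeq
so the problem reduces entirely to estimating $V_{p,k}(z_m)$.

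To estimate $V_{p,k}(z_m)$, write $Q(z)=A_p V_{p,k}(z)$, where $A_p$ is the leading coefficient obtained by extracting $z^k$ from the determinant expansion in Theorem \ref{th:Q}. In the resulting representation
$$V_{p,k}(z)=\frac{\sum_{s_1<\cdots<s_k} T_{s_1,\ldots,s_k}\,V(z,z_{s_1},\ldots,z_{s_k})\prod_i\Psi_p(z_{s_i})^{-1}}{\sum_{s_1<\cdots<s_k} T_{s_1,\ldots,s_k}\,V(z_{s_1},\ldots,z_{s_k})\prod_i\Psi_p(z_{s_i})^{-1}},$$
the denominator is dominated by the single index set $\{1,\ldots,k\}$, as in Theorem \ref{th:den1}. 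In the numerator, the corresponding principal term $V(z_m,z_1,\ldots,z_k)=0$ because $z_m$ is repeated. I would then observe the crucial point: among index sets that replace one $z_{m'}$ ($1\le m'\le k$) by some $z_{k+j}$ ($1\le j\le r$), those with $m'\ne m$ still yield a Vandermonde in which $z_m$ appears twice and therefore vanish at $z=z_m$. The only surviving subleading contributions are the $r$ sets $\{1,\ldots,m-1,m+1,\ldots,k,k+j\}$, $j=1,\ldots,r$, each carrying the small factor $\Psi_p(z_m)/\Psi_p(z_{k+j})$. Bounding $|\Psi_p(z_{k+j})|\ge |\widetilde\Psi_{p,k}|$ yields $V_{p,k}(z_m)=O\!\bigl(\Psi_p(z_m)/\widetilde\Psi_{p,k}\bigr)$, which via \eqref{eq:tayl} gives \eqref{eq:z0}. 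The limsup estimate \eqref{eq:z1} then follows from \eqref{eq:phi-psi}.

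For the refined result in the case $r=1$, only the index set $\{1,\ldots,m-1,m+1,\ldots,k+1\}$ contributes at leading order in the numerator, with coefficient $T_{1,\ldots,m-1,m+1,\ldots,k+1}$ (assumed nonzero). Using
$$V(z_m,z_1,\ldots,z_{m-1},z_{m+1},\ldots,z_{k+1})=(-1)^{m-1}V(z_1,\ldots,z_{k+1})=(-1)^{m-1}V(z_1,\ldots,z_k)\prod_{i=1}^{k}(z_{k+1}-z_i)$$
(the first equality by moving $z_m$ into its sorted position; the second by the telescoping property of the Vandermonde), dividing by the analogous expression for the denominator (which reduces to $T_{1,\ldots,k}V(z_1,\ldots,z_k)$ to leading order), and substituting into \eqref{eq:tayl} with $S'(z_m)=\prod_{i\ne m}(z_m-z_i)$ produces the stated constant $C_m$, with the factor $(z_{k+1}-z_m)$ coming from splitting $\prod_{i=1}^k(z_{k+1}-z_i)$ and the sign $(-1)^{k-m}$ arising from combining $(-1)^{k}$ (from Theorem \ref{th:Q}), $(-1)^{m-1}$ (from the row transposition above), and the minus sign in \eqref{eq:tayl}.

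The main obstacle is the sign and normalization bookkeeping in step three: one must track the $(-1)^k$ in Theorem \ref{th:Q}, the $(-1)^{m-1}$ from moving $z_m$ into its sorted position inside the Vandermonde, and the distinction between the determinant $Q(z)$ and the monic normalization of $V_{p,k}(z)$, so as to recover the explicit $(-1)^{k-m}$ in $C_m$. Aside from these accounting details, the argument is a direct adaptation of [Sidi 2008, Theorem 5.2], valid here precisely because the algebraic template of $\Psi_p$, $u_{i,j}$, and $\Delta_j(z)$ is the same as for IMMPE, as noted in the remark following Lemma \ref{le22}.
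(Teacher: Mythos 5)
Your proposal is correct and takes essentially the same route as the paper, which proves this theorem simply by invoking Theorem 5.2 of \cite{Sidi:2008:MTC} verbatim once the algebraic structure of $Q(z)$ in Theorem \ref{th:Q} is matched to the IMMPE case; your reconstruction of that cited argument --- the Newton/Taylor step $z^{(p)}_m-z_m\approx -V_{p,k}(z_m)/S'(z_m)$ combined with the vanishing at $z=z_m$ of every Vandermonde term whose index set still contains $m$ --- is exactly the underlying mechanism. The only slip is that your displayed ratio for $V_{p,k}(z)$ omits a factor $(-1)^k$ relative to $Q(z)/A_p$, but since you reinstate that sign in the final bookkeeping, the constant $C_m$ comes out as stated.
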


\subsection{Convergence analysis for $R_{p,k}(z)$}
We now develop a de Montessus type convergence theory for the $R_{p,k}(z)$; that is, we analyze the error  $F(z)-R_{p,k}(z)$  as $p\to\infty$ with $k$  being held fixed.

We start by showing that
 the vectors $\widehat{T}^{(p)}_{s_0,s_1,\ldots,s_k}(z)$ are (i)\,meromorphic  in $z$ with simple poles at the $z_i$ and (ii)\,bounded for all large $p$.
This is the subject of the lemma that follows.

\begin{lemma}\label{le13}
For $z \not\in\{z_{s_0},z_{s_1}\ldots,z_{s_k}\}$, $\widehat{T}^{(p)}_{s_0,s_1,\ldots,s_k}(z)$ is analytic in $z$ and bounded for all large $p$.
\end{lemma}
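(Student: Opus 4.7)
The plan is to exploit the fact that in the determinant defining $\widehat{T}^{(p)}_{s_0,\ldots,s_k}(z)$ only the first row carries any $z$-dependence, and that this dependence is explicit and simple. Expanding along the first row gives
$$\widehat{T}^{(p)}_{s_0,\ldots,s_k}(z)=\sum_{j=0}^k (-1)^{j}\widehat{e}^{(p)}_{s_j}(z)\,M_j,$$
where the minors $M_j$ are $z$-independent scalars built from the entries $\alpha_{i,s_\ell}$. Since $\widehat{e}^{(p)}_{s_j}(z)=v_{s_j}\psi_{p+1,p+k}(z_{s_j})/(z-z_{s_j})$ is a vector-valued rational function of $z$ whose only singularity is a simple pole at $z=z_{s_j}$, each term in the expansion is analytic off $\{z_{s_0},\ldots,z_{s_k}\}$, and analyticity of $\widehat{T}^{(p)}_{s_0,\ldots,s_k}(z)$ on this set follows immediately.

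For the boundedness claim, the key observation is that the only objects in the determinant that depend on $p$ are the scalar factors $\psi_{p+j+1,p+k}(z_{s_\ell})$, $0\leq j\leq k$. Each such factor is a product of at most $k$ terms of the form $(z_{s_\ell}-\xi^{(p)}_r)$ with $p+1\leq r\leq p+k$. The hypothesis on the interpolation scheme, namely that the $\xi^{(p)}_i$ have no limit points in $K$ (which contains the point at infinity as well as each $z_{s_\ell}$), implies that the collection $\bigcup_p\{\xi^{(p)}_1,\ldots,\xi^{(p)}_{p+k}\}$ is contained in a bounded subset of $\mathbb{C}$. Consequently $|z_{s_\ell}-\xi^{(p)}_r|$ is bounded uniformly in $p$ and $r$, so every factor $\psi_{p+j+1,p+k}(z_{s_\ell})$ is bounded uniformly in $p$.

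Putting the two pieces together, I would conclude the lemma as follows: for fixed $z$ outside $\{z_{s_0},\ldots,z_{s_k}\}$, each $\widehat{e}^{(p)}_{s_j}(z)$ is a bounded vector in $\C^N$ (uniformly in $p$), and each $\alpha_{i,s_\ell}=(q,v_{s_\ell})\psi_{p+i+1,p+k}(z_{s_\ell})$ is a bounded scalar (uniformly in $p$). The determinant $\widehat{T}^{(p)}_{s_0,\ldots,s_k}(z)$, being a fixed algebraic combination (cofactor expansion) of these bounded entries, is therefore bounded for all large $p$, and the bound is uniform on any compact set that avoids $\{z_{s_0},\ldots,z_{s_k}\}$.

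The argument is essentially bookkeeping; there is no serious obstacle, only the need to be careful about two points. First, one must recognize that the $z$-dependence lives solely in the first row so that meromorphicity reduces to meromorphicity of the entries $\widehat{e}^{(p)}_{s_j}(z)$. Second, one must invoke the assumption on the interpolation points to ensure that the $\psi$-factors at the fixed poles $z_{s_\ell}$ do not blow up with $p$; this is the only place where the global hypothesis on $\Xi_p$ is used.
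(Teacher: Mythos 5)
Your proof is correct and follows essentially the same route as the paper: cofactor expansion of the determinant along its (only $z$-dependent) first row, followed by the observation that the $\xi^{(p)}_{p+1},\ldots,\xi^{(p)}_{p+k}$ are uniformly bounded because the interpolation points have no limit points in $K$ (which contains $\infty$). The only cosmetic difference is that the paper identifies the cofactors as $(-1)^jT_{s_0,\ldots,s_{j-1},s_{j+1},\ldots,s_k}$ and invokes Lemma \ref{le9} to conclude they are actually \emph{independent} of $p$, whereas you merely bound them uniformly in $p$ via the boundedness of the $\alpha_{i,s_\ell}$; both suffice for the stated conclusion.
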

\begin{proof}
Expanding the vector-valued determinant in  \eqref{eq:That} with respect to its first row, we obtain
\beq \label{eq:Th1} \widehat{T}^{(p)}_{s_0,s_1,\ldots,s_k}(z)=\sum^k_{j=0} E_j \widehat{e}^{(p)}_{s_j}(z),\eeq
where
\beq \label{eq:Th2} E_j=(-1)^jT_{s_0,\ldots,s_{j-1},s_{j+1},\ldots,s_k},\quad
\widehat{e}^{(p)}_{s_j}(z)=\frac{v_{s_j}}{z-z_{s_j}}\prod_{i=p+1}^{p+k}(z_{s_j}-\xi^{(p)}_i),\quad j=0,1,\ldots,k.\eeq
By Lemma \ref{le9}, $E_j$ are all scalars independent of $p$. In addition, $\widehat{e}^{(p)}_{s_j}(z)$ are bounded in $p$ since $\xi^{(p)}_{p+1},\ldots,\xi^{(p)}_{p+k}$ are bounded due to the assumption that the $\xi^{(p)}_i$ have no limit points in $K$, and $k$ is a fixed integer.
This completes the proof.
\end{proof}

We make use of Lemma \ref{le13} in the proof of Theorem \ref{th:num1}
 that follows.
Throughout the rest of this work, $\|Y\|$ denotes the vector norm
of $Y\in\C^N$.

\begin{theorem} [see \cite{Sidi:2008:MTC}, Theorem 5.3] \label{th:num1}
Under the conditions of Theorem \ref{th:den1}, $R_{p,k}(z)$ exists and is unique and satisfies
\begin{equation}\label{eq:R1}
F(z)-R_{p,k}(z)=O\bigg(\frac{\Psi_{p}(z)}{\widetilde{\Psi}_{p,k}}\bigg)
\quad \text{as $p\to\infty$,}
\end{equation}
uniformly on every compact subset of $\C\setminus\{z_1,\ldots,z_\mu\}$,
with $\widetilde{\Psi}_{p,k}$ as defined in \eqref{eq:tildepsi}. From  this, it also follows that
\begin{equation}\label{eq:R2}
\limsup_{p\to\infty}\big\|F(z)-R_{p,k}(z)\big\|^{1/p}\leq \frac{\Phi(z)}{\Phi(z_{k+1})},
\quad z\in \widetilde{K}=K\setminus\{z_1,\ldots,z_\mu\},
\end{equation}
uniformly on each compact subset of $\widetilde{K},$ and
\begin{equation}\label{eq:R3}
\limsup_{p\to\infty}\big\|F(z)-R_{p,k}(z)\big\|^{1/p}\leq \frac{1}{\Phi(z_{k+1})},
\quad z\in E,
\end{equation}
uniformly on $E$.
 Thus, uniform convergence takes place
for $z$ in any compact subset of the set $\widetilde{K}_k$, where
$$\widetilde{K}_k=\text{{\em int}}\,\Gamma_{\sigma_k}\setminus\{z_1,\ldots,z_k\};
\quad\sigma_k=\Phi(z_{k+1}).$$

When $r=1$ in \eqref{order-phi}, that is, when
\begin{equation}\label{eq:z200}
\Phi(z_k)<\Phi(z_{k+1})<\Phi(z_{k+2}),
\end{equation}
and  $\widehat{T}^{(p)}_{1,\ldots,k+1}(z)\neq 0$ in addition to \eqref{eq:T1}, we have the more refined result
\begin{gather}
F(z)-R_{p,k}(z)\sim B_p(z)\frac{\psi_{1,p}(z)}{\Psi_p(z_{k+1})}\quad\text{as $p\to\infty$,}\nonumber\\
B_p(z)=(-1)^{k}\frac{\widehat{T}^{(p)}_{1,\ldots,k+1}(z)}{T_{1,\ldots,k}}
\prod^k_{i=1}\frac{z_{k+1}-z_i}{z-z_i},\label{eq:z210}
\end{gather}
and  $B_p(z)$ is bounded for all large $p$.
\end{theorem}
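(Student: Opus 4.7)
The plan is to apply the closed-form error representation \eqref{eq:RRR} of Theorem \ref{th:F-R} directly: the denominator $Q(z)$ is already handled asymptotically by Theorem \ref{th:den1}, so the main task is to identify the leading terms of the numerator sum as $p\to\infty$. The crucial structural fact is Lemma \ref{le13}, which confines the entire $p$-dependence of each summand (on compacta away from the poles) to the weight $\prod_{i=0}^k \Psi_p(z_{s_i})^{-1}$.

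For the numerator, I would note that $|\Psi_p(z_s)|^{1/p}\to\kappa\Phi(z_s)$, so under the ordering \eqref{order} and the gap hypothesis \eqref{order-phi}, the slowest-decaying terms in the sum $\sum_{1\le s_0<\cdots<s_k\le\mu}$ are exactly those whose index set equals $\{1,\ldots,k\}\cup\{k+j\}$ for some $j\in\{1,\ldots,r\}$. Bounding the determinants $\widehat{T}^{(p)}_{s_0,\ldots,s_k}(z)$ by Lemma \ref{le13} and summing these $r$ leading terms yields a numerator of size $O\big([\prod_{i=1}^k \Psi_p(z_i)]^{-1}\widetilde{\Psi}_{p,k}^{-1}\big)$, with all remaining index sets of strictly lower order. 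Dividing by \eqref{eq:Q1}, the factor $\prod_{i=1}^k \Psi_p(z_i)^{-1}$ cancels, the constant $(-1)^k T_{1,\ldots,k}\,V(z,z_1,\ldots,z_k)$ is bounded and nonzero on compacta in $\C\setminus\{z_1,\ldots,z_k\}$, and what survives is $O(\psi_{1,p}(z)/\widetilde{\Psi}_{p,k})$. Since $\Psi_p(z)=\psi_{1,p}(z)\,\psi_{p+1,p+k}(z)$ and the second factor is a polynomial of fixed degree $k$ with bounded zeros, this is equivalent to \eqref{eq:R1}.

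The geometric-rate bounds \eqref{eq:R2} and \eqref{eq:R3} then follow from \eqref{eq:phi-psi}, together with $\widetilde{\Psi}_{p,k}^{1/p}\to\kappa\Phi(z_{k+1})$ (guaranteed by \eqref{order-phi}) and the standard fact $\limsup|\Psi_p(z)|^{1/p}\le\kappa$ on $E$. For the refined statement under \eqref{eq:z200}, only the single index set $\{1,\ldots,k,k+1\}$ contributes at leading order; substituting into \eqref{eq:RRR} and using the Vandermonde identity $V(z_1,\ldots,z_{k+1})/V(z,z_1,\ldots,z_k)=(-1)^k\prod_{i=1}^k(z_{k+1}-z_i)/(z-z_i)$ yields \eqref{eq:z210}, with the two $(-1)^k$ prefactors from \eqref{eq:Q1} and \eqref{eq:delta} cancelling, and boundedness of $B_p(z)$ coming once more from Lemma \ref{le13}.

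The main hurdle will be making the dominance argument uniform in $z$ on compact subsets of $\widetilde{K}_k$: the vectors $\widehat{e}^{(p)}_{s_j}(z)$ carry simple poles at $z_{s_j}$ and $V(z,z_1,\ldots,z_k)$ vanishes at $z=z_i$ for $i\le k$, so neighborhoods of these singular points must be excised before the $O$-estimates become uniform. Fortunately, since the algebraic structure of $\Psi_p(z)$, $u_{i,j}$, and $\Delta_j(z)$ for ITEA matches that of IMMPE (as emphasised in the remark following Lemma \ref{le22}), the detailed bookkeeping from \cite{Sidi:2008:MTC} applies essentially verbatim.
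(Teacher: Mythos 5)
Your proposal is correct and follows essentially the same route as the paper, which proves this theorem by applying Theorem 5.3 of the cited IMMPE study verbatim: the closed-form error representation \eqref{eq:RRR}, dominance of the index sets $\{1,\ldots,k\}\cup\{k+j\}$ under \eqref{order-phi}, division by the asymptotic form \eqref{eq:Q1} of $Q(z)$, and---the one genuinely ITEA-specific ingredient---Lemma \ref{le13} to keep the $p$-dependent determinants $\widehat{T}^{(p)}_{s_0,\ldots,s_k}(z)$ bounded. Your Vandermonde cancellation and sign bookkeeping for \eqref{eq:z210} check out, and your replacement of $\psi_{1,p}(z)$ by $\Psi_p(z)$ is harmless at the level of the $1/p$-th root estimates \eqref{eq:R2}--\eqref{eq:R3}.
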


\section{Convergence theory for general meromorphic $F(z)$ with simple poles}\label{se5}
\setcounter{equation}{0}
\setcounter{theorem}{0}
Let the  sets of interpolation points $\{\xi^{(p)}_1,\ldots,
\xi^{(p)}_{p+k}\}$ be as in the
preceding section.
We now turn to the convergence analysis of  $R_{p,k}(z)$ as
$p\to\infty$, when the function $F(z)$ is analytic in $E$ and
meromorphic in  $E_\rho=\text{int}\,\Gamma_\rho$, where
$\Gamma_\rho,$ as before,  is the locus $\Phi(z)=\rho$ for some $\rho>1$.
Assume that $F(z)$ has $\mu$ simple poles $z_1,\ldots,z_\mu$ in $E_\rho$.
Thus, $F(z)$ has the following form:
\begin{equation}\label{merF}
F(z)=\sum^\mu_{s=1}\frac{v_s}{z-z_s}+\Theta(z),
\end{equation}
$\Theta(z)$ being analytic in $E_\rho$.

The treatment of this case is based entirely on that of the preceding section,
the differences being minor. Note that the polynomial $u(z)$ of \eqref{ratF}
is now replaced by $\Theta(z)$ in \eqref{merF}. Previously, we had
$u[\xi_m,\ldots,\xi_n]=0$ for all large $n-m$, as a consequence  of which, we had
  \eqref{uuu} for $u_{i,j}$ and  \eqref{Del} for $\Delta_j(z)$.
Instead of these, we now have
\begin{equation}\label{eq:uij1}
u_{i,j}=
-\sum^\mu_{s=1}\alpha_{i,s}\frac{\psi_{1,j}(z_s)}{\Psi_p(z_s)}+
\big(q,\Theta[\xi_{j+1},\ldots,\xi_{p+i}]\big),
\end{equation}
with $\alpha_{i,s}$ as in \eqref{uuu}, and
\begin{equation}\label{eq:deltaj1}
\Delta_j(z)=\psi_{1,p}(z)\bigg(\sum^\mu_{s=1}\widehat{e}^{(p)}_s(z)\frac{\psi_{1,j}(z_s)}{\Psi_{p}(z_s)}
+\Theta[z,\xi_{j+1},\ldots,\xi_{p}]\bigg),
\end{equation}
with $\widehat{e}^{(p)}_s(z)$ as in \eqref{Del}.

It is clear that the treatment of the general meromorphic $F(z)$
will be the same as that of the
rational $F(z)$ provided the contributions from $\Theta(z)$ to $u_{i,j}$ and
$\Delta_j(z)$, as $p\to\infty$,  are negligible compared to the rest of the terms
in \eqref{eq:uij1} and \eqref{eq:deltaj1}. This is indeed the case, as is shown in
\cite[Lemma 6.1]{Sidi:2008:MTC}:
\begin{lemma}[\cite{Sidi:2008:MTC}, Lemma 6.1]\label{le:theta11}
With $F(z)$ as in the first paragraph, there holds
\begin{equation}\label{eq:theta111}
\limsup_{p\to\infty}\big\|\Theta[\xi^{(p)}_{j+1},\ldots,\xi^{(p)}_{p+i}]\big\|^{1/p}\leq \frac{1}{\kappa\rho}.
\end{equation}
There also holds
\begin{equation}\label{eq:theta112}
\limsup_{p\to\infty}\big\|\Theta[ z,\xi^{(p)}_{j+1},\ldots,\xi^{(p)}_{p}]\big\|^{1/p}\leq \frac{1}{\kappa\rho},
\end{equation}
uniformly in every compact subset of $E_\rho.$ These hold for all  $i\leq k$ and $j\leq k$.
\end{lemma}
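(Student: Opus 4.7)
The plan is to use the Hermite contour integral representation for divided differences of an analytic function and then exploit the asymptotic formula \eqref{eq:phi-psi} for $|\Psi_p(z)|^{1/p}$. Since $\Theta(z)$ is analytic in $E_\rho$, for any $\rho'$ with $1<\rho'<\rho$ we may fix the contour $\Gamma_{\rho'}$ as the locus $\Phi(t)=\rho'$; for all sufficiently large $p$ the interpolation points $\xi^{(p)}_1,\ldots,\xi^{(p)}_{p+k}$ lie inside $\Gamma_{\rho'}$ (they have no limit points in $K$, so they cluster on $E\subset E_{\rho'}$). Hermite's formula then gives
\[
\Theta[\xi^{(p)}_{j+1},\ldots,\xi^{(p)}_{p+i}]
=\frac{1}{2\pi i}\oint_{\Gamma_{\rho'}}\frac{\Theta(t)}{\psi_{j+1,\,p+i}(t)}\,dt,
\]
and a factorization shows $\psi_{j+1,\,p+i}(t)=\Psi_p(t)\big/\bigl[\psi_{1,j}(t)\,\psi_{p+i+1,\,p+k}(t)\bigr]$, where the two divisors are polynomials of bounded degree (at most $k$) whose $p$-th roots tend to $1$ uniformly on $\Gamma_{\rho'}$.

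For the first step in extracting the asymptotic rate, I would apply \eqref{eq:phi-psi} uniformly on the compact curve $\Gamma_{\rho'}$: since $\Phi(t)\equiv\rho'$ there, we get $|\Psi_p(t)|^{1/p}\to \kappa\rho'$ uniformly, and hence $|\psi_{j+1,\,p+i}(t)|^{1/p}\to\kappa\rho'$ uniformly in $t\in\Gamma_{\rho'}$. Standard $ML$-bounding of the integral, together with the boundedness of $\|\Theta(t)\|$ and of the length of $\Gamma_{\rho'}$, then yields
\[
\limsup_{p\to\infty}\bigl\|\Theta[\xi^{(p)}_{j+1},\ldots,\xi^{(p)}_{p+i}]\bigr\|^{1/p}\leq\frac{1}{\kappa\rho'}.
\]
Since $\rho'$ may be chosen arbitrarily close to $\rho$, letting $\rho'\uparrow\rho$ gives \eqref{eq:theta111}.

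For \eqref{eq:theta112}, the argument is analogous, but with an additional node $z$. If $z$ lies in a compact subset $L\subset E_\rho$, choose $\rho^*$ with $\max_{w\in L}\Phi(w)<\rho^*<\rho$ and deform the contour to $\Gamma_{\rho^*}$, so that $z$ is enclosed by $\Gamma_{\rho^*}$ together with the interpolation points for large $p$. The Hermite formula then reads
\[
\Theta[z,\xi^{(p)}_{j+1},\ldots,\xi^{(p)}_{p}]
=\frac{1}{2\pi i}\oint_{\Gamma_{\rho^*}}\frac{\Theta(t)}{(t-z)\,\psi_{j+1,\,p}(t)}\,dt,
\]
and since $|t-z|$ is bounded away from zero uniformly for $t\in\Gamma_{\rho^*}$ and $z\in L$, the same $p$-th root estimate gives the bound $1/(\kappa\rho^*)$, uniformly in $z\in L$. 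Sending $\rho^*\uparrow\rho$ completes the proof.

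The main technical point that needs care is the uniformity of the convergence $|\Psi_p(t)|^{1/p}\to\kappa\Phi(t)$ on compacta of $K$, which is already built into the hypothesis \eqref{eq:Phi}, together with the need to choose a contour $\Gamma_{\rho'}$ that simultaneously contains the (moving) interpolation points and (in the second statement) the variable $z$; both requirements are met because the interpolation points cluster only on $E$ and the set $L$ is a compact subset of $E_\rho$. No genuinely new argument beyond that of \cite[Lemma 6.1]{Sidi:2008:MTC} is needed, since the structure of $\Psi_p(z)$ here enters the proof only through its $p$-th root asymptotics, which are of the same form as in the IMMPE case.
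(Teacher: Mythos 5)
Your argument is correct and is essentially the proof of the cited result (\cite[Lemma 6.1]{Sidi:2008:MTC}), which the present paper does not reproduce but merely quotes: the Hermite contour-integral representation of the divided difference over a level curve $\Gamma_{\rho'}$ with $\rho'<\rho$, the $p$th-root asymptotics \eqref{eq:phi-psi} of $\Psi_p$ on that curve (the bounded-degree factors $\psi_{1,j}$ and $\psi_{p+i+1,p+k}$ being asymptotically negligible), an $ML$ estimate, and finally $\rho'\uparrow\rho$. Your treatment of the extra node $z$ and of the uniformity on compact subsets of $E_\rho$ is likewise the standard one, so no gap remains.
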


 With this information, we can now prove
convergence results for $V_{n,k}(z)$ and $F(z)-R_{p,k}(z)$ for general meromorphic $F(z)$.
We recall that the poles $z_1,\ldots,z_\mu$ of $F(z)$ are ordered such that
\begin{equation}\label{order-p}
 \Phi(z_1)\leq \Phi(z_2)\leq \cdots\leq \Phi(z_\mu)\leq\rho.
 \end{equation}
We also adopt the notation of Theorems
\ref{th:den1}, \ref{th:den2}, and \ref{th:num1}.

\begin{theorem}[see \cite{Sidi:2008:MTC}, Theorem 6.2]\label{th:den11}
{\em (i)}\,When $k<\mu$, assume that
\begin{equation}\label{order-phi-p}
\Phi(z_k)<\Phi(z_{k+1})=\cdots=\Phi(z_{k+r})<\begin{cases}
\Phi(z_{k+r+1})&\quad\text{if $k+r<\mu$},\\ \rho &\quad\text{if $k+r=\mu$,}\end{cases}
\end{equation}
in addition to \eqref{order-p}.
Assume also that
\begin{equation}\label{eq:T00}
\prod^k_{i=1}(q,v_i)\neq0.\eeq Consequently,
\begin{equation}\label{eq:T11}
T_{1,\ldots,k}\neq 0,
\end{equation}
Then, all the results of Theorem \ref{th:den1}  hold.

{\em (ii)}\, When $k=\mu$,
\begin{equation}\label{eq:Q3-p}
\limsup_{p\to\infty}\big|V_{p,k}(z)-S(z)\big|^{1/p}\leq \frac{\Phi(z_k)}{\rho}.
\end{equation}
uniformly on every compact subset of $\C\setminus\{z_1,\ldots,z_\mu\}$.
\end{theorem}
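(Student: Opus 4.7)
The approach is to reduce the meromorphic case to the rational case of Section 4 by treating the $\Theta$-contributions in \eqref{eq:uij1} and \eqref{eq:deltaj1} as asymptotically negligible perturbations. I would split each entry of the last $k$ rows of $Q(z)$ as $u_{i,j}=u^R_{i,j}+\eta_{i,j}$, where $u^R_{i,j}=-\sum_{s=1}^{\mu}\alpha_{i,s}\psi_{1,j}(z_s)/\Psi_p(z_s)$ is precisely the rational quantity of \eqref{uuu} and $\eta_{i,j}=(q,\Theta[\xi_{j+1},\ldots,\xi_{p+i}])$; likewise I would split $\Delta_j(z)=\Delta^R_j(z)+\widetilde{\eta}_j(z)$ with $\widetilde{\eta}_j(z)=\psi_{1,p}(z)\,\Theta[z,\xi_{j+1},\ldots,\xi_p]$. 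Lemma \ref{le:theta11} then gives $\limsup_p|\eta_{i,j}|^{1/p}\leq 1/(\kappa\rho)$, and an analogous bound for $\widetilde{\eta}_j(z)$ uniformly on compact subsets of $E_\rho$.

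For part (i), I would expand $Q(z)$ from \eqref{delta} by multilinearity in its last $k$ rows, obtaining $Q(z)=Q^R(z)+\sum_{\emptyset\ne J\subseteq\{1,\ldots,k\}}Q_J(z)$, where $Q^R(z)$ is exactly the rational determinant of Theorem \ref{th:Q} (so Theorem \ref{th:den1} already yields \eqref{eq:Q1} for $Q^R(z)$) and each $Q_J(z)$ has the rows indexed by $J$ filled with $\eta$-entries. It then suffices to bound every $Q_J(z)$. Expanding $Q_J(z)$ by Laplace along its $|J|$ eta-rows and then expanding the residual $k-|J|$ rational rows over pole-subsets as in the derivation of \eqref{eq:Q} (i.e.\ using Lemma \ref{le:a2}), while invoking Lemma \ref{le:theta11} on each $\eta$-entry, yields
\[
\limsup_{p\to\infty}\left|\frac{Q_J(z)}{Q^R(z)}\right|^{1/p}\leq\Bigl(\frac{\Phi(z_k)}{\rho}\Bigr)^{|J|},
\]
uniformly on compact subsets of $\C\setminus\{z_1,\ldots,z_k\}$. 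Hypothesis \eqref{order-phi-p} forces $\Phi(z_{k+1})<\rho$, so $\Phi(z_k)/\rho<\Phi(z_k)/\Phi(z_{k+1})$, making every mixed term $o(\Psi_p(z_k)/\widetilde{\Psi}_{p,k})$; hence \eqref{eq:Q1}--\eqref{eq:Q3} all survive. The identical decomposition applied to $\Delta(z)$ preserves the analogue of Theorem \ref{th:delta}, which in turn gives Theorems \ref{th:den2} and \ref{th:num1} verbatim.

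For part (ii), $k=\mu$: the rational sum \eqref{eq:Q} collapses to the single term $Q^R(z)=(-1)^\mu T_{1,\ldots,\mu}V(z,z_1,\ldots,z_\mu)\prod_{i=1}^{\mu}\Psi_p(z_i)^{-1}$. Using the identity $V(z,z_1,\ldots,z_\mu)=(-1)^\mu V(z_1,\ldots,z_\mu)S(z)$ and the (implicit) nondegeneracy $\prod_i(q,v_i)\ne 0$ that forces $T_{1,\ldots,\mu}\ne 0$ by Lemma \ref{le9}, one obtains $Q^R(z)=M^R\cdot S(z)$ for a nonzero $p$-dependent scalar $M^R$. There is now no pole beyond $z_\mu$ to supply a next-order term within the rational part, so the dominant error comes entirely from the $\eta$-perturbations. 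The same bound gives $\limsup_p|Q_J(z)/Q^R(z)|^{1/p}\leq(\Phi(z_k)/\rho)^{|J|}$ for every nonempty $J$; the worst case $|J|=1$ yields $V_{p,k}(z)=Q(z)/M_k=S(z)+O\bigl((\Phi(z_k)/\rho)^p\bigr)$, which is \eqref{eq:Q3-p}.

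The main technical obstacle is the row-by-row bookkeeping of the $2^k$-term multilinear expansion: for each nonempty $J$ one must correctly identify the dominant pole-subset of size $k-|J|$ appearing in the residual rational sub-determinant (always the $k-|J|$ smallest-$\Phi$ poles $z_1,\ldots,z_{k-|J|}$) and verify that the resulting worst ratio is indeed $(\Phi(z_k)/\rho)^{|J|p}$. The crucial inequality throughout is $\rho>\Phi(z_{k+1})$ in case (i) and $\rho>\Phi(z_k)$ in case (ii)---exactly what \eqref{order-phi-p} provides---which ensures every $\Theta$-perturbation decays strictly faster than the natural error coming from the first excluded pole in the rational analysis.
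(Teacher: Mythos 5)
Your proposal is correct and follows essentially the same route as the paper, which itself only sketches the argument by deferring to \cite{Sidi:2008:MTC}: split $u_{i,j}$ and $\Delta_j(z)$ into their rational parts plus $\Theta$-contributions, invoke Lemma \ref{le:theta11} to see that the latter decay like $(\kappa\rho)^{-p}$, and conclude that the multilinear expansion of $Q(z)$ (and $\Delta(z)$) is dominated by the purely rational term, so the Section 5 analysis carries over, with $\Phi(z_k)/\rho$ replacing $\Phi(z_k)/\Phi(z_{k+1})$ as the governing ratio when $k=\mu$. Your bookkeeping of the mixed terms $Q_J(z)$ and the key inequality $\Phi(z_{k+1})<\rho$ are exactly the points the paper's cited proof relies on.
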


Theorem \ref{th:den11} implies that $V_{p,k}(z)$ has precisely $k$ zeros that
tend to those of $S(z)$. Let us denote the zeros of $V_{p,k}(z)$ by $z^{(p)}_m$,
$m=1,\ldots,k.$ Then $\lim_{p\to\infty}z^{(p)}_m=z_m$,  $m=1,\ldots,k.$
In the next theorem, we provide the rate of convergence of each of these zeros.

\begin{theorem} [\cite{Sidi:2008:MTC}, Theorem 6.3]\label{th:den22}
Assume the  conditions of Theorem \ref{th:den2}.

{\em (i)}\,When $k<\mu$,  all the results of Theorem \ref{th:den2} hold.

{\em (ii)}\,When $k=\mu$,
\begin{equation}\label{eq:z1-p}
\limsup_{p\to\infty}\big|z^{(p)}_m-z_m\big|^{1/p}\leq \frac{\Phi(z_m)}{\rho},\quad
m=1,\ldots,k.
\end{equation}
\end{theorem}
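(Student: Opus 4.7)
The plan is to adapt the proof of Theorem \ref{th:den2} to the general meromorphic setting, controlling the additional contributions coming from the analytic part $\Theta(z)$ via Lemma \ref{le:theta11}. Theorem \ref{th:den11} already gives $V_{p,k}(z)\to S(z)=\prod_{i=1}^k(z-z_i)$ uniformly on compacta away from $\{z_1,\ldots,z_\mu\}$, and since $S$ has simple zeros, Rouch\'e's theorem localizes exactly one zero $z^{(p)}_m$ of $V_{p,k}$ near each $z_m$. A standard Taylor expansion about $z_m$ then yields
\[
z^{(p)}_m-z_m \sim -\frac{V_{p,k}(z_m)}{S'(z_m)}\qquad\text{as }p\to\infty,
\]
so the task reduces to a refined estimate of $V_{p,k}(z_m)$, i.e., of $Q(z_m)$ divided by the normalizing factor already extracted in Theorem \ref{th:den11}.

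For part (i) ($k<\mu$), I would substitute \eqref{eq:uij1} into the columns of the determinant defining $Q(z)$ and expand by multilinearity. This splits $Q(z_m)$ into (a) a \emph{pole part}, identical to the expression in Theorem \ref{th:Q}, and (b) \emph{mixed parts} in which at least one column is built from a $\Theta$-divided-difference $(q,\Theta[\xi_{j+1},\ldots,\xi_{p+i}])$. Piece (a), by the very analysis used in Theorem \ref{th:den2}, contributes at order $\Psi_p(z_m)/\widetilde{\Psi}_{p,k}$ at $z=z_m$: the leading $(1,\ldots,k)$-term of Theorem \ref{th:Q} vanishes because $V(z_m,z_1,\ldots,z_k)=0$, and the dominant survivor comes from index sets that swap $z_m$ with some $z_{k+j}$, $1\leq j\leq r$. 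By Lemma \ref{le:theta11}, every $\Theta$-entry has $\limsup|\cdot|^{1/p}\leq 1/(\kappa\rho)$, so each mixed determinant in (b) decays as $(\kappa\rho)^{-p(1+o(1))}$ relative to the natural normalization. The hypothesis \eqref{order-phi-p} gives $\Phi(z_{k+r})<\rho$, whence $|\widetilde{\Psi}_{p,k}|^{-1}\sim(\kappa\Phi(z_{k+r}))^{-p}$ is strictly larger than $(\kappa\rho)^{-p}$; the mixed pieces are therefore absorbed into the error of (a), and the full conclusion of Theorem \ref{th:den2} transfers verbatim.

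For part (ii) ($k=\mu$), the pole part has only one summand, indexed by $\{1,\ldots,k\}$, which vanishes at $z=z_m$ since $V(z_m,z_1,\ldots,z_k)=0$. All remaining contributions to $Q(z_m)$ come from mixed determinants, the dominant ones being those with exactly one $\Theta$-column. Reducing such a sub-determinant by a Vandermonde computation parallel to Lemma \ref{le9} and invoking Lemma \ref{le:theta11}, one finds that $V_{p,k}(z_m)$ is of order $\Psi_p(z_m)/\rho^p$ relative to the normalizing constant, and consequently $\limsup|z^{(p)}_m-z_m|^{1/p}\leq \Phi(z_m)/\rho$, as required.

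The main obstacle lies in part (i): the delicate bookkeeping needed to verify that the refined cancellation producing the $z_m$-dependent rate $\Psi_p(z_m)/\widetilde{\Psi}_{p,k}$ (and not just the coarser uniform rate $\Psi_p(z_k)/\widetilde{\Psi}_{p,k}$ provided by Theorem \ref{th:den11}) survives the introduction of the $\Theta$-corrections. This is essentially the content of the corresponding argument in \cite{Sidi:2008:MTC}; because the strict inequality $\Phi(z_{k+r})<\rho$ keeps the $\Theta$-perturbations uniformly below the pole-level subdominant term, the original argument adapts without essential change.
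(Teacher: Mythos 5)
Your overall strategy is the one the paper itself relies on: Theorem \ref{th:den22} is stated without a written proof, the intended argument being precisely that Lemma \ref{le:theta11} makes the $\Theta$-contributions in \eqref{eq:uij1} negligible, so that a multilinear expansion of the determinant $Q(z)$ reduces everything to the rational case of Theorem \ref{th:den2}. Your architecture (Rouch\'e localization, $z^{(p)}_m-z_m\sim -V_{p,k}(z_m)/S'(z_m)$, splitting $Q(z_m)$ into a pole part and $\Theta$-contaminated mixed parts) is the right one, and your part (ii) comes out correctly.

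There is, however, a genuine gap in the quantitative treatment of the mixed terms in part (i). You assert that each mixed determinant ``decays as $(\kappa\rho)^{-p(1+o(1))}$ relative to the natural normalization'' and then compare this with $|\widetilde{\Psi}_{p,k}|^{-1}$. Neither step is right. A mixed determinant with one $\Theta$-row and $k-1$ pole rows indexed by a set $S$ has absolute size $(\kappa\rho)^{-p(1+o(1))}\prod_{i\in S}|\Psi_p(z_{i})|^{-1}$, so relative to the normalizing cofactor $M_k\sim C\prod_{i=1}^k\Psi_p(z_i)^{-1}$ it is of size $\big(\Phi(z_{j})/\rho\big)^{p(1+o(1))}$, where $z_j$ is the pole omitted from $S$; and the quantity it must be dominated by is not $|\widetilde{\Psi}_{p,k}|^{-1}$ but the pole-part remainder $\Psi_p(z_m)/\widetilde{\Psi}_{p,k}\approx\big(\Phi(z_m)/\Phi(z_{k+1})\big)^p$ of \eqref{eq:z0}. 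Taking $j=k$, the worst case your estimate permits, one only gets $\big(\Phi(z_k)/\rho\big)^p$, and the inequality $\Phi(z_k)/\rho\le\Phi(z_m)/\Phi(z_{k+1})$ is \emph{false} in general: try $\Phi(z_1)=1.1$, $\Phi(z_2)=2$, $\Phi(z_3)=2.5$, $\rho=2.6$, $k=2$, $m=1$. The missing observation is that at $z=z_m$ the first row of $Q$, namely $\big(\psi_{1,0}(z_m),\ldots,\psi_{1,k}(z_m)\big)$, is proportional to the pole-row contribution of the index $s=m$; hence every surviving term of the multilinear expansion, mixed or not, must omit $m$ from its pole indices. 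This forces the omitted pole to be $z_m$ itself and reduces every mixed term to relative order $\big(\Phi(z_m)/\rho\big)^{p(1+o(1))}$. Only then does the strict inequality $\Phi(z_{k+1})<\rho$ (which follows from \eqref{order-phi-p} and \eqref{order-p}) absorb the $\Theta$-perturbations into the bound \eqref{eq:z0} and into the refined asymptotics \eqref{eq:z10}. This is the same cancellation you invoke, correctly, in part (ii); it must be invoked in part (i) as well.
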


Our next and last result concerns the convergence of $R_{p,k}(z)$:
\begin{theorem}[\cite{Sidi:2008:MTC}, Theorem 6.4]\label{th:num11}
Assume the conditions of Theorem \ref{th:num1}. Then $R_{p,k}(z)$ exists and is unique.

{\em (i)}\,When $k<\mu$,  all the results of Theorem \ref{th:num1} hold
with $\widetilde{K}=E_\rho\setminus\{z_1,\ldots,z_\mu\}$.

{\em (ii)}\,When $k=\mu$, there holds

\begin{equation}\label{eq:R2-p}
\limsup_{p\to\infty}\big\|F(z)-R_{p,k}(z)\big\|^{1/p}\leq \frac{\Phi(z)}{\rho},
\quad z\in \widetilde{K}=E_\rho\setminus\{z_1,\ldots,z_\mu\},
\end{equation}
uniformly on each compact subset of $\widetilde{K},$ and
\begin{equation}\label{eq:R3-p}
\limsup_{p\to\infty}\big\|F(z)-R_{p,k}(z)\big\|^{1/p}\leq \frac{1}{\rho},
\quad z\in E,
\end{equation}
uniformly on $E$.
\end{theorem}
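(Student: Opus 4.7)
I would adapt the rational-case argument of Theorem \ref{th:num1}, treating $\Theta(z)$ as a small perturbation controlled by Lemma \ref{le:theta11}. Writing $F = F_{\text{rat}} + \Theta$ with $F_{\text{rat}}(z) = \sum_{s=1}^\mu v_s/(z-z_s)$, formulas \eqref{eq:uij1}--\eqref{eq:deltaj1} give $u_{i,j} = u_{i,j}^{\text{rat}} + \theta_{i,j}$ with $\theta_{i,j} = (q,\Theta[\xi_{j+1},\ldots,\xi_{p+i}])$, and similarly $\Delta_j(z) = \Delta_j^{\text{rat}}(z) + \psi_{1,p}(z)\,\Theta[z,\xi_{j+1},\ldots,\xi_p]$. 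Expanding $Q(z)$ and $\Delta(z)$ by row-wise multilinearity of the determinant, each decomposes into a sum of $2^k$ (respectively $2^{k+1}$) determinants, indexed by which rows are ``rational'' and which are ``$\Theta$.'' By Lemma \ref{le:theta11}, every $\Theta$-row contributes an extra factor that is $O((\kappa\rho)^{-p})$ in norm (up to subexponential corrections), while rational rows are estimated as in Section 4.

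For part (i), when $k<\mu$, the all-rational term in each expansion is precisely the quantity analyzed in Theorems \ref{th:Q}, \ref{th:delta}, and \ref{th:F-R}, and, after multiplication by $\prod_{i=1}^k \Psi_p(z_i)$ (respectively $\prod_{i=0}^k \Psi_p(z_{s_i})$), it tends to a nonzero leading limit under the hypothesis $T_{1,\ldots,k}\neq 0$. Each mixed term that replaces one rational row by a $\Theta$-row trades a scaling of order $|\Psi_p(z_s)|^{-1} \sim (\kappa\Phi(z_s))^{-p}$ for one of order $(\kappa\rho)^{-p}$, gaining a relative decay of $(\Phi(z_s)/\rho)^p$. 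Since $\Phi(z_s) \leq \Phi(z_{k+r}) < \rho$ for all relevant poles by assumption, these mixed and pure-$\Theta$ contributions are of strictly smaller order than the leading rational asymptotics. Consequently the proof of Theorem \ref{th:num1} applies verbatim, now with $\widetilde{K} = E_\rho \setminus \{z_1,\ldots,z_\mu\}$.

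For part (ii), $k=\mu$, the decisive new ingredient is the remark following Theorem \ref{th:F-R}: the all-rational term in the expansion of $\Delta(z)$ vanishes identically, because the outer sum in \eqref{eq:delta} is empty when $k=\mu$. Thus every surviving term in $\Delta(z)$ contains at least one $\Theta$-row. Expanding the largest such term (top row replaced by its $\Theta$-part, lower rows left rational) along its first row and estimating each cofactor by the leading asymptotics of Theorem \ref{th:Q} for the $k\times k$ rational submatrix, one obtains $\|\Delta(z)\| = O\bigl(|\psi_{1,p}(z)|(\kappa\rho)^{-p}\prod_{i=1}^k\Psi_p(z_i)^{-1}\bigr)$, with all other mixed/$\Theta$-only terms being of strictly smaller order. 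Since $|\psi_{1,p}(z)|^{1/p} \to \kappa\Phi(z)$ by \eqref{eq:phi-psi}, this yields $\limsup\|\Delta(z)\|^{1/p} \leq \Phi(z)/\rho$ on compacta of $\widetilde{K}$. The denominator $Q(z)$ is handled exactly as in part (i): its all-rational leading term is nonzero, so $V_{p,k}(z) \to S(z)$ and $Q(z)$ remains bounded away from zero on compacta of $\widetilde{K}$. Dividing gives \eqref{eq:R2-p}; specializing to $z \in E$ (where $\Phi(z) \leq 1$) gives \eqref{eq:R3-p}. The main obstacle is the part (ii) bookkeeping, where several mixed terms with different combinations of $\Theta$- and rational-row scalings must each be verified to respect the bound $\Phi(z)/\rho$; but this matches \cite{Sidi:2008:MTC}, Theorem 6.4, verbatim once Lemma \ref{le:theta11} is in place, so no essentially new difficulty arises.
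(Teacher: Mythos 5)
Your proposal follows the same route as the paper (which defers the proof entirely to \cite{Sidi:2008:MTC}, Theorem 6.4, after setting up exactly this strategy): split $u_{i,j}$ and $\Delta_j(z)$ into rational and $\Theta$ parts as in \eqref{eq:uij1}--\eqref{eq:deltaj1}, expand $Q(z)$ and $\Delta(z)$ by row-wise multilinearity, control every $\Theta$-row via Lemma \ref{le:theta11}, and use the emptiness of the sum in \eqref{eq:delta} when $k=\mu$ to obtain the improved rate $\Phi(z)/\rho$. One minor inaccuracy: in part (ii) not every competing term is of \emph{strictly} smaller order than the one you single out (e.g.\ a rational top row combined with exactly one $\Theta$-row among the $u$-rows scales the same way when $\mu=k$), but since all such terms obey the same bound, the stated $\limsup$ estimates are unaffected.
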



\end{document}